\theoremstyle{plain} 
\newtheorem{thm}{Theorem}[section]
\newtheorem{prop}[thm]{Proposition}
\newtheorem{cor}[thm]{Corollary}
\newtheorem{lem}[thm]{Lemma}
\theoremstyle{definition}
\newtheorem{eg}[thm]{Example}
\newtheorem{ques}[thm]{Question}
\newtheorem{rmk}[thm]{Remark}
\newtheorem{chunk}[thm]{}
\numberwithin{equation}{section}
\newcommand{\lra}{\longrightarrow}
\newcommand{\lla}{\longleftarrow}
\newcommand{\xla}{\xleftarrow}\newcommand{\la}{\leftarrow}
\newcommand{\xra}{\xrightarrow}
\newcommand{\fm}{\mathfrak{m}}
\newcommand{\fn}{\mathfrak{n}}
\newcommand{\fp}{\mathfrak{p}}
\newcommand{\fq}{\mathfrak{q}}
\newcommand{\up}[1]{\parbox[][1em][]{0pt}{}^{#1}\!}
\DeclareMathOperator{\Ann}{Ann}
\DeclareMathOperator{\Jac}{Jac}
\DeclareMathOperator{\Ass}{Ass}
\DeclareMathOperator{\e}{e}
\DeclareMathOperator{\E}{E}
\DeclareMathOperator{\Tr}{Tr}
\DeclareMathOperator{\gr}{grade}
\newcommand{\crs}{\operatorname{crs}}
\newcommand{\crsp}{\crs_p}
\newcommand{\drs}{\operatorname{drs}}
\newcommand{\drsp}{\drs_p}
\DeclareMathOperator{\coker}{coker}
\DeclareMathOperator{\depth}{depth}
\DeclareMathOperator{\Ext}{Ext}
\DeclareMathOperator{\length}{length}
\DeclareMathOperator{\Hom}{Hom}
\DeclareMathOperator{\id}{id}
\DeclareMathOperator{\im}{im}
\DeclareMathOperator{\pd}{pd}
\DeclareMathOperator{\fd}{fd}
\DeclareMathOperator{\rank}{rank}
\DeclareMathOperator{\Soc}{Soc}
\DeclareMathOperator{\Supp}{Supp}
\DeclareMathOperator{\Spec}{Spec}
\DeclareMathOperator{\Tor}{Tor}
\DeclareMathOperator{\Min}{Min}
\DeclareMathOperator{\Max}{Max}
\newcommand{\ov}{\overline}
\newcommand{\ol}{\overline}
\newcommand{\wh}{\widehat}
\newcommand{\x}{\underline{\mathbf{x}}}
\newcommand{\F}{\mathbf{F}}
\newcommand{\I}{\mathbf{I}}
\renewcommand{\ge}{\geqslant} \renewcommand{\le}{\leqslant} 
\renewcommand{\geq}{\geqslant} \renewcommand{\leq}{\leqslant} 
\renewcommand{\phi}{\varphi}
\def\urltilda{\kern -.15em\lower .7ex\hbox{\~{}}\kern .04em}
\def\urldot{\kern -.10em.\kern -.10em}\def\urlhttp{http\kern -.10em\lower -.1ex
\hbox{:}\kern -.12em\lower 0ex\hbox{/}\kern -.18em\lower 0ex\hbox{/}}
\begin{document}

\title[On the test properties of the Frobenius Endomorphism]{On the test properties of the Frobenius Endomorphism}

\author[Olgur Celikbas]{Olgur Celikbas}
\address{Olgur Celikbas\\School of Mathematical and Data Sciences\\
West Virginia University}
\email{olgur.celikbas@math.wvu.edu}

\author[Arash Sadeghi]{Arash Sadeghi}
\address{Arash Sadeghi\\ School of Mathematics, Institute for Research in Fundamental Sciences (IPM), P.O. Box: 19395-5746, Tehran, Iran}
\curraddr{Dokhaniat 49179-66686, Gorgan, IRAN}	
\email{sadeghiarash61@gmail.com}

\author[Yongwei Yao]{Yongwei Yao}
\address{Yongwei Yao\\
Department of Mathematics and Statistics, 
Georgia State University, 
Atlanta, GA 30303, USA}
\email{yyao@gsu.edu}

\keywords{}

\subjclass[2020]{Primary 13A35, 13C14, 13D07; Secondary 13C10, 13C11}
\keywords{Cohen-Macaulay modules, depth and torsion properties of tensor products of modules, Frobenius endomorphism, rings of prime characteristic, Ext and Tor.}

\begin{abstract} In this paper, we prove two theorems concerning the test properties of the Frobenius endomorphism over commutative Noetherian local rings of prime characteristic $p$. Our first theorem generalizes a result of Funk-Marley on the vanishing of Ext and Tor modules, while our second theorem generalizes one of our previous results on maximal Cohen-Macaulay tensor products. In these earlier results, we replace $\up{e}{R}$ with a more general module $\up{e}{M}$, where $R$ is a Cohen-Macaulay ring, $M$ is a Cohen-Macaulay $R$-module with full support, and $\up{e}{M}$ is the module viewed as an $R$-module via the $e$-th iteration of the Frobenius endomorphism. We also provide examples and present applications of our results, yielding new characterizations of the regularity of local rings.
\end{abstract}

\maketitle

\section{Introduction}\label{sec:intro}

Throughout the paper, all rings are assumed to be commutative and Noetherian. By $(R, \fm, k)$, we mean that $R$ is a local ring with a unique maximal ideal $\fm$ and residue field $k$. 

Let $R$ be a ring of prime characteristic $p$, $F:R\to R$ be the Frobenius endomorphism, and let $M$ be an $R$-module. Each iteration $F^e$ of $F$ defines a new $R$-module structure on $M$, denoted by $\up{e}{M}$, whose scalar multiplication is given as follows: For $r\in R$ and $x\in \up{e}{M}$, we have that $r \cdot x=r^{p^e}x$.  We say that $R$ is \emph{F-finite} if, for some $e\geq 1$ (or equivalently, for all $e\geq 1$), the module $\up{e}R$ is finitely generated over $R$; see, for example, \cite{PS4}. We denote by $F^e_R(-)$ the scalar extension along the $e$th iteration $F^e_R: R \to R$ of $F$. Thus, if $\sum_{i} n_i \otimes s_i\in F^e_R(M)$, where $F^e_R(M)=M\otimes_R\up{e}R$, $n_i\in M$ and $r_i \in R$, then $r \cdot \big(\sum_{i} n_i \otimes s_i \big)= \sum_{i} n_i \otimes (rs_i)$, with $rs_i$ being the product of $r$ and $s_i$ in $R$. Note that $F_R^e(M)$ is the $S$-module $M\otimes_RS$ obtained via the base change $F^e:R\to S=R$.

The module structure of $\up{e}R$ (as an $R$-module) contains important information about the homological properties of the ring $R$.  For example, a remarkable result of Kunz \cite{KunzRegular} shows that $R$ is regular if and only if $\up{e}R$ is a flat $R$-module for some (or equivalently, for all) $e \ge 1$; see also \cite{AHIY} and \cite{Rodio} for extensions of this result. Motivated by Kunz's result, the test properties of the Frobenius endomorphism have been extensively studied. 

If $N$ is a finitely generated $R$-module, it follows from the work of Herzog \cite{HerzogCharp} and Peskine-Szpiro \cite{PS3, PS4} that $\pd_R(N)<\infty$ if and only if $\Tor_i^R(\up{e}R, N)=0$ for infinitely many $e$ and for all $i\geq 1$. Avramov-Miller \cite{Av-Mi} showed that, if $R$ is a complete intersection, the vanishing of a single $\Tor_n^R(\up{e}R, N)$ for some $e\geq 1$ and $n\geq 1$ suffices to conclude that $\pd_R(N)<\infty$. Koh-Lee \cite{KL} developed ideas rooted in techniques of Burch \cite{Burch68}, Herzog \cite{HerzogCharp}, and Hochster \cite{Mel77}, and showed that $\up{e}R$ detects the finiteness of $N$ even when finitely many $\Tor$ modules vanish. Specifically, Koh-Lee proved that, given integers $e \gg 0$ and $t\geq 1$, if $\Tor_i^R(\up{e}R, N)=0$ for all $t, \ldots, t+\depth(R)$, then $\pd_R(N)<\infty$. They further showed that in the case where $R$ is Cohen–Macaulay, the number of vanishing $\Tor$ modules can be reduced by one. We refer the reader to the expository work \cite{MillerSurvey} of Miller for further details. 

In this paper, we focus on the following result of Funk-Marley \cite{FM,FMcorrection}, which examines the vanishing of $ \Tor_i^R(\up{e}R, N)$ for the case where $R$ is Cohen-Macaulay and $N$ is possibly an infinitely generated $R$-module.

\begin{chunk} [{Funk-Marley \cite[3.1 and 3.2]{FM}}] \label{FM-intro} Let $(R,\fm, k)$ be a $d$-dimensional Cohen-Macaulay local ring of prime characteristic $p$, with $d\geq 1$, and let $N$ be an $R$-module. Given integers $e \gg 0$ and $t\geq 1$, the following hold:  
\begin{enumerate}[\rm(i)]
\item If $\Tor_i^R(\up{e}R, N)=0$ for all $i=t,\, \dots,\, t+d-1$, then $\fd_R(N) \le d$. 
\item If $R$ is F-finite and $\Ext^i_R(\up{e}R, N)=0$ for all $i=t,\, \dots,\,t+d-1$, then $\id_R(N)\leq d$. 
\end{enumerate}
\end{chunk}

One of the main goals of this paper is to generalize the result of Funk-Marley stated in \ref{FM-intro}. In fact, we prove more and establish the following theorem:
 
\begin{thm} \label{main-thm-intro} Let $(R,\fm, k)$ be a $d$-dimensional local ring of prime characteristic $p$, $M$ be a finitely generated Cohen-Macaulay $R$-module such that $\Supp_R(M) = \Spec(R)$, and let $N$ be an $R$-module. Assume $\depth(R) \geq 1$. Given integers $t\geq 1$ and $e \gg 0$, we have the following:
\begin{enumerate}[\rm(i)]
\item If $\Tor^R_i(\up{e}M, N)=0$ for all $i=t,\, \dots,\,t+d-1$, then $\fd_R(N) \leq d$.
\item If $R$ is F-finite and $\Ext^i_R(\up{e}M, N)=0$ for all $i=t,\, \dots,\,t+d-1$, then $\id_R(N)\leq d$.
\item If $N$ is finitely generated and $\Ext^i_R(N, \up{e}M)=0$ for all $i=t,\, \dots,\,t+d-1$, then $\pd_R(N)\leq t-1$. 
\end{enumerate}
\end{thm}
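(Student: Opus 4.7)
The approach is to combine the Frobenius adjunction with the techniques behind \ref{FM-intro}(i) to reduce part (iii) to a Tor-vanishing analysis, and then to sharpen the conclusion via non-vanishing of $\Ext$ at the projective dimension. As a preliminary, I would verify that $\up{e}M$ is itself a maximal Cohen-Macaulay $R$-module of dimension $d$ with full support for every $e\geq 1$: the support equality follows since $\sqrt{\Ann_R(\up{e}M)}=\sqrt{\Ann_R(M)}$ (an element $r\in R$ acts on $\up{e}M$ as $r^{p^e}$ does on $M$), and the depth equality follows since a sequence is $\up{e}M$-regular exactly when its $p^e$-th powers form an $M$-regular sequence, and $p^e$-th powers of $M$-regular sequences remain $M$-regular because $M$ is Cohen-Macaulay.

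The heart of the argument is the Frobenius adjunction $\Hom_R(-,\up{e}M)\cong\Hom_R(F^e_R(-),M)$. Applied to a projective resolution of $N$ and derived, this yields the convergent Grothendieck spectral sequence
$$
E_2^{p,q}=\Ext^p_R\bigl(\Tor^R_q(\up{e}R,N),\,M\bigr)\;\Longrightarrow\;\Ext^{p+q}_R(N,\up{e}M).
$$
Using the Cohen-Macaulay-with-full-support property of $M$---which, via Ischebeck's lemma, constrains the lowest index where $\Ext^p_R(-,M)$ can be nonzero in terms of the dimension of its first argument---together with the hypothesized vanishing of $\Ext^{p+q}_R(N,\up{e}M)$ for $p+q\in[t,t+d-1]$, I would extract vanishing of the $\Tor^R_q(\up{e}R,N)$ modules for $q$ in a suitable range. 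Feeding this data into the Koh-Lee-style proof behind \ref{FM-intro}(i) then forces $\fd_R(N)\leq d$; since $N$ is finitely generated over the local ring $R$, this is equivalent to $\pd_R(N)<\infty$.

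Once $\pd_R(N)<\infty$, the final bound $\pd_R(N)\leq t-1$ follows by non-vanishing considerations. If $\pd_R(N)=s$, minimality of the free resolution of $N$ together with Nakayama applied to the nonzero module $\up{e}M$ yields $\Ext^s_R(N,\up{e}M)\neq 0$, so $s\notin[t,t+d-1]$. To rule out $s\geq t+d$, I would combine Ischebeck's vanishing $\Ext^i_R(N,\up{e}M)=0$ for $i<d-\dim_R(N)$ with the Auslander--Buchsbaum equality $s=\depth(R)-\depth_R(N)$, showing that $s\geq t+d$ forces some $\Ext^i_R(N,\up{e}M)$ with $i\in[t,t+d-1]$ to be nonzero, contradicting the hypothesis.

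The main obstacle is the middle step: extracting clean vanishing of $\Tor^R_q(\up{e}R,N)$ modules from the given $\Ext$-vanishing via the spectral sequence requires careful control of the higher differentials, and this is where the Cohen-Macaulay-with-full-support hypothesis on $M$ plays its essential role and where the flexibility $e\gg 0$ is most delicate. I expect this to be the only step where the argument must genuinely go beyond a routine adaptation of the Funk-Marley framework.
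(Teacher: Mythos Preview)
Your proposal addresses only part~(iii); parts~(i) and~(ii) require separate arguments (the paper handles them via a duality reduction and a technical proposition about minimal injective resolutions, not via the route you sketch). I will focus on~(iii), where your strategy has a genuine gap.

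The core problem is your second step: once you extract vanishing of $\Tor^R_q(\up{e}R,N)$ for a window of $d$ consecutive~$q$, you plan to invoke the Koh--Lee/Funk--Marley machinery to conclude $\fd_R(N)<\infty$. But that machinery, in the form \ref{FM-intro}(i), requires $R$ to be Cohen--Macaulay; with only $d$ consecutive vanishings (rather than $d+1$) there is no such conclusion for a general local ring. The hypotheses of the theorem assume only $\depth(R)\ge 1$, and the existence of a maximal Cohen--Macaulay module~$M$ with full support does \emph{not} force $R$ to be Cohen--Macaulay. So even granting the spectral-sequence extraction, the argument breaks here. Your spectral-sequence step is also genuinely delicate: you know $E_2^{p,q}=\Ext^p_R(T_q,M)$ vanishes for $p<d-\dim T_q$ and is nonzero at $p=d-\dim T_q$ when $T_q\neq 0$, but the differentials run from higher~$q$ to lower~$q$, and without control on $\dim T_q$ for all~$q$ simultaneously there is no clean way to force $E_\infty$-survival of a nonzero class in the vanishing window. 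You flag this as ``the main obstacle,'' but it is not clear that it can be overcome at all in this generality.

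The paper's argument is entirely different and avoids both issues. It never passes through $\Tor^R_*(\up{e}R,N)$ or any spectral sequence. Instead, for $e\gg 0$ one chooses an $M$-regular sequence $\x=x_1,\dotsc,x_d$ with $\fm^{[p^e]}M\subseteq(\x)M$, i.e., $\fm\cdot\up{e}M\subseteq \up{e}\big((\x)M\big)$. Applying $\Hom_R(-,\up{e}M)$ to the minimal free resolution of $N$ and successively killing the regular elements~$x_j$, one obtains (from the $d$ vanishing Ext's) an exact sequence whose relevant map is $\Hom_R(h_t, \up{e}M/(\x)\up{e}M)$; this map is zero because $h_t$ has entries in $\fm$ and $\fm$ kills $\up{e}M/(\x)\up{e}M$. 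Nakayama then gives $\Hom_R(\Omega^t_R N,\up{e}M)=0$ directly. To finish, if $\Omega^t_R N\neq 0$ one picks $\fp\in\Ass_R(\Omega^t_R N)\subseteq\Ass(R)$; since $\depth(R)\ge 1$, $\fp\neq\fm$, so $\dim R_\fp<d$ and the $d$ vanishing Ext's localized at~$\fp$ are more than enough (now $\depth M_\fp+1\le d$) to force $\pd_{R_\fp}(N_\fp)<\infty$, hence $N_\fp$ free by Auslander--Buchsbaum. Then $(\Omega^t_R N)_\fp$ is free and nonzero while $(\up{e}M)_\fp\neq 0$, contradicting $\Hom_R(\Omega^t_R N,\up{e}M)=0$.

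Your final paragraph (bounding $\pd_R(N)\le t-1$ once $\pd_R(N)<\infty$) is essentially correct and in fact simpler than you indicate: Auslander--Buchsbaum gives $s=\pd_R(N)\le\depth(R)\le d<t+d$, and your observation that $\Ext^s_R(N,\up{e}M)\neq 0$ (valid because $\fm\cdot\up{e}M=\fm^{[p^e]}M\subsetneq M$ by ordinary Nakayama on~$M$) then forces $s<t$.
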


Parts (i) and (ii) of Theorem \ref{main-thm-intro} recover the aforementioned result of Funk-Marley for the case where $M=R$. Note that, in each part of Theorem~\ref{main-thm-intro}, only $d$ consecutive vanishings of $\Ext$ or $\Tor$ modules are needed to conclude the homological property the module of $N$.

We first record several preliminary results in Section 2 and then prove the first two parts of Theorem~\ref{main-thm-intro} as Theorem~\ref{injective} in Section 3. The third part of Theorem~\ref{main-thm-intro} is established as Theorem ~\ref{p1} in Section 4. Additionally, in Example~\ref{ex:ext-2}, we show that the conclusion of Theorem~\ref{main-thm-intro} may fail if the ring $R$ in question has zero depth. 

Li \cite{JL2012} proved that, if $(R,\fm, k)$ is a Cohen-Macaulay local ring, $N$ is a finitely generated $R$-module with rank, and $F_R^e(N)$ is maximal Cohen-Macaulay for some $e \gg 0$, then $N$ is free. In \cite{CSY}, the authors of the present paper replaced the rank hypothesis on $N$ with the weaker assumption that $N$ is generically free, and proved the following result: 

\begin{chunk} [{Celikbas-Sadeghi-Yao \cite[1.3]{CSY}}] \label{CSY} Let $(R,\fm, k)$ be a Cohen-Macaulay local ring of prime characteristic $p$, and let $M$ and $N$ be finitely generated $R$-modules. Assume $N$ is generically free, that is, $N_{\fp}$ is free over $R_\fp$ for all $\fp \in \Ass(R)$. If $F_R^e(N)$ is maximal Cohen-Macaulay for some $e \gg 0$, then $N$ is free.  
\end{chunk}

In Section~\ref{sec:Frob-proj}, as a byproduct of Theorem~\ref{main-thm-intro}(iii), we generalize \ref{CSY} and prove the following result; see Corollary~\ref{CSY-gen}. Note that $F_R^e(N)\otimes_RM$ is the $S$-module $(N\otimes_RS)\otimes_SM$, where $R\to S=R$ is the $e$-th iteration of the Frobenius endomorphism.

\begin{thm} \label{CSY-gen-intro} Let $(R,\fm, k)$ be a local ring of prime characteristic $p$, and let $M$ and $N$ be finitely generated $R$-modules. Assume the following conditions hold:
\begin{enumerate}[\rm(i)]
\item $M$ is Cohen-Macaulay and $\Supp_R(M) = \Spec(R)$. 
\item $N$ is generically free, that is, $N_{\fp}$ is a free $R_{\fp}$-module for all $\fp \in \Ass(R)$. 
\end{enumerate}
If $F_R^e(N)\otimes_RM$ is maximal Cohen-Macaulay for some $e \gg 0$, then $N$ is free.  
\end{thm}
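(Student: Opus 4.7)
My plan is to deduce the theorem from Theorem~\ref{main-thm-intro}(iii). The starting point is the natural isomorphism
\[
F_R^e(N)\otimes_R M \ \cong\ (N\otimes_R\up{e}R)\otimes_R M \ \cong\ N\otimes_R(\up{e}R\otimes_R M) \ \cong\ N\otimes_R\up{e}M,
\]
obtained by associativity of $\otimes_R$ together with the identification $\up{e}R\otimes_R M\cong\up{e}M$: the standard right $R$-action on $\up{e}R$ is absorbed by the tensor against $M$, leaving only the Frobenius-twisted left action. Thus the hypothesis becomes: $N\otimes_R\up{e}M$ is maximal Cohen--Macaulay for some $e\gg 0$.

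I would begin with routine reductions. By faithful flatness of $R\to\widehat R$, one may assume $R$ is complete. If $\dim R=0$, then $R$ is Artinian local with $\Ass R=\{\fm\}$, and generic freeness of $N$ forces $N_\fm=N$ to be free; so one may assume $d:=\dim R\ge 1$. One also verifies that $\depth_R(\up{e}M)=d$: an $M$-regular sequence in $\fm$ stays regular after raising each entry to the $p^e$-th power, and such a sequence is precisely an $\up{e}M$-regular sequence in $\fm$.

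The heart of the argument is to establish that
\[
\Ext^i_R(N,\up{e}M) \ =\ 0 \quad\text{for all } i=1,\,\ldots,\,d,
\]
possibly after enlarging $e$. Once this holds, Theorem~\ref{main-thm-intro}(iii) with $t=1$ yields $\pd_R N\le 0$, i.e.\ $N$ is free. To extract these vanishings, I would combine two inputs. First, generic freeness of $N$ implies that for every $\fp\in\Ass R$ the module $N_\fp$ is $R_\fp$-free, so each $\Tor^R_i(N,-)$ and $\Ext^i_R(N,-)$ with $i\ge 1$ has support of codimension $\ge 1$ in $\Spec R$. Second, the MCM hypothesis gives $\depth_R(N\otimes_R\up{e}M)=d$, hence $H^j_\fm(N\otimes_R\up{e}M)=0$ for $j<d$. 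A depth-lemma chase along the short exact sequences arising from a minimal free resolution of $N$ tensored with $\up{e}M$, combined with the small-support constraint on each $\Tor^R_i(N,\up{e}M)$, should force these $\Tor$ modules to vanish in low degrees; dualizing via a Hom-chase on the same resolution then gives the required $\Ext$ vanishings.

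The main obstacle is precisely this translation from ``MCM tensor product plus generic freeness'' to ``$\Ext$-vanishing in $d$ consecutive degrees.'' Merely knowing $\depth(N\otimes_R\up{e}M)=d$ does not by itself imply vanishing of higher $\Tor^R_i(N,\up{e}M)$ or $\Ext^i_R(N,\up{e}M)$; the point is that Frobenius iteration amplifies the small-support phenomenon coming from generic freeness of $N$, so that for $e$ large any nonzero $\Tor^R_i(N,\up{e}M)$ in the relevant range would produce a depth drop incompatible with the MCM-ness of $N\otimes_R\up{e}M$. This parallels the strategy used in~\ref{CSY} for the special case $M=R$.
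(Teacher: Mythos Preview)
Your reduction and identification $F_R^e(N)\otimes_R M\cong N\otimes_R\up{e}M$ are fine, and your endgame---apply Theorem~\ref{main-thm-intro}(iii) once you have $d$ consecutive Ext vanishings---is the right one. The gap is in the ``heart of the argument.'' You propose to establish $\Ext^i_R(N,\up{e}M)=0$ for $i=1,\dotsc,d$, but your sketch (Tor vanishing from a depth chase, then ``dualizing via a Hom-chase'' to obtain Ext vanishing) is not a valid mechanism: there is no general passage from $\Tor_i^R(N,\up{e}M)=0$ to $\Ext^i_R(N,\up{e}M)=0$, and your appeal to Frobenius ``amplifying the small-support phenomenon coming from generic freeness of $N$'' is off target---the Frobenius twist acts on $M$, not on $N$, so it does nothing to sharpen the support of $\Tor_i^R(N,-)$. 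Moreover, generic freeness only controls these modules over $\Ass(R)$, not over the full punctured spectrum, so the depth-lemma bookkeeping you allude to does not close.

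The paper's proof avoids this entirely by replacing $N$ with its Auslander transpose $\Tr_R N$. One first runs an induction on $\dim R$ (after passing to an F-finite extension with infinite residue field) to upgrade generic freeness to the statement that $N_\fp$ is free for every $\fp\neq\fm$; the base case $\depth(R)=0$ is immediate since then $\fm\in\Ass(R)$. With $N$ locally free on the punctured spectrum, $\depth_R(\up{e}M)=d$, and $\depth_R(N\otimes_R\up{e}M)=d$, the lemma recorded as~\ref{CSY0} (an Auslander--Bridger style result) yields $\Ext^i_R(\Tr_R N,\up{e}M)=0$ for $i=1,\dotsc,d$. Now Theorem~\ref{main-thm-intro}(iii) applied to $\Tr_R N$ gives $\pd_R(\Tr_R N)\le 0$, so $\Tr_R N$ is free, hence $N$ is free. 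The module to feed into Theorem~\ref{main-thm-intro}(iii) is $\Tr_R N$, not $N$; that is the missing idea in your proposal.
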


Examples~\ref{ornek1} and ~\ref{ornek2} showcase the necessity of the hypotheses $\Supp_R(M) = \Spec(R)$ and $N$ is generically free in Theorem~\ref{CSY-gen-intro}. An immediate consequence of Theorem~\ref{CSY-gen-intro} over one-dimensional rings can be stated as follows:

\begin{cor} Let $(R,\fm, k)$ be a one-dimensional reduced local ring of prime characteristic $p$ and let $0\neq I$ be an ideal of $R$. If $F_R^e(N)\otimes_RI$ is torsion-free for some finitely generated $R$-module $N$ and $e\gg 0$, then $N$ is free.
\end{cor}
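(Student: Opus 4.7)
The plan is to apply Theorem~\ref{CSY-gen-intro} with $M=I$. First, $R$ is Cohen-Macaulay of dimension one: since $R$ is one-dimensional and reduced, $\Ass(R)=\Min(R)$ and $\fm\notin\Min(R)$, so $\fm$ contains a nonzerodivisor and $\depth R=1=\dim R$. Second, $N$ is generically free: for each $\fp\in\Min(R)=\Ass(R)$, the localization $R_\fp$ is a zero-dimensional reduced local ring, i.e., a field, so every $R_\fp$-module—in particular $N_\fp$—is automatically free.

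Next, viewing the ideal $I$ as an $R$-module, I verify that $I$ is maximal Cohen-Macaulay with $\Supp_R(I)=\Spec(R)$. The key point is that $I$ contains a nonzerodivisor of $R$: once this is established, $\Ann_R(I)=0$ yields $\Supp_R(I)=\Spec(R)$, and $\depth_R(I)\geq 1=\dim R$ forces $I$ to be MCM. I expect this to be the main technical step, since one must exploit the torsion-free hypothesis on $F_R^e(N)\otimes_R I$ to exclude the degenerate situation in which $I$ is contained in a minimal prime of $R$ (in which case $I$ would be annihilated by a nonzero element coming from the intersection of the remaining minimal primes).

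Finally, the module $F_R^e(N)\otimes_R I$ is finitely generated (base change along $F^e\colon R\to S=R$ preserves finite generation as an $S$-module, and tensoring with the finitely generated ideal $I$ preserves it further) and is torsion-free by hypothesis; over the one-dimensional Cohen-Macaulay ring $R$, any nonzero finitely generated torsion-free module has depth at least $1=\dim R$ and is therefore maximal Cohen-Macaulay. With all four hypotheses of Theorem~\ref{CSY-gen-intro} verified, I conclude that $N$ is free. The main obstacle in executing the plan is the verification that $I$ contains a nonzerodivisor; the remaining steps are routine consequences of one-dimensional Cohen-Macaulay theory and the reducedness of $R$.
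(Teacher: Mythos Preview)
Your approach—applying Theorem~\ref{CSY-gen-intro} with $M=I$—is exactly what the paper intends; the corollary is presented there as an immediate consequence of that theorem with no further argument. You also correctly isolate the only nontrivial point: one needs $\Supp_R(I)=\Spec(R)$, equivalently that $I$ contains a nonzerodivisor.

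However, the step you flag as the ``main obstacle'' is in fact an actual gap that cannot be closed using the torsion-free hypothesis. Take $R=\mathbb{F}_p[\![x,y]\!]/(xy)$, $I=(x)$, and $N=R/(y)$. Then $R$ is a one-dimensional reduced local ring, $I\neq 0$, and $N$ is generically free (indeed $N_{(x)}=0$ and $N_{(y)}\cong R_{(y)}$). One computes $F_R^e(N)=R/(y^{p^e})$ and hence
\[
F_R^e(N)\otimes_R I \;\cong\; R/(y^{p^e}) \otimes_R (x) \;\cong\; (x) \;\cong\; R/(y),
\]
which is torsion-free (maximal Cohen--Macaulay) over $R$ for every $e\ge 0$; yet $N$ is not free. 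This is essentially the paper's own Example~\ref{ornek1}, with $M=R/(x)\cong (y)$ playing the role of the ideal. So the torsion-freeness of $F_R^e(N)\otimes_R I$ does \emph{not} force $I$ to avoid all minimal primes, and the corollary as stated fails without an additional hypothesis such as ``$I$ contains a nonzerodivisor'' (automatic when $R$ is a domain). Your instinct that this was the crux was right; the resolution is that the statement itself needs that extra assumption, not that the proof can supply it.
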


Theorem~\ref{main-thm-intro}(iii), in addition to Theorem~\ref{CSY-gen-intro}, has other applications, namely Corollaries~\ref{cor:dual-p1}, \ref{min1}, and \ref{min2}. Moreover, in Corollary~\ref{cor:main-thm-intro}, we obtain new characterizations of the regularity in terms of the vanishing of $\Ext$ and $\Tor$.


\section{Preliminaries}

In this section, we record several preliminary results and observations that are necessary for our arguments in the subsequent sections. For the main results of this paper, one can skip this section and proceed to Section~\ref{sec:Frob-inj} and Section~\ref{sec:Frob-proj}.

\begin{chunk} \label{Begin} Let $(R,\fm, k)$ be a local ring and let $M$ be an $R$-module.
\begin{enumerate}[\rm(i)]
\item We set $M^{\vee}=\Hom_R\big(M,E_R(k)\big)$, where $E_R(k)$ is the injective hull of $k$. Note that $\Hom_R(-,E_R(k))$ is a faithful exact functor.
\item Assume that $M$ is finitely generated over $R$. Given $n\geq 1$, we denote by $\Omega^n_R M$ the $n$-th \emph{syzygy} of M, namely, the image of the $n$-th differential map in a minimal free resolution of $M$. By convention, $\Omega_R^0M=M$.
\item If $M\neq 0$ is finitely generated over $R$ and $\dim_R(M)=t$, we define the \emph{Hilbert-Samuel multiplicity} of $M$ as
$$\e_R(M)= t! \lim_{n \to \infty} \dfrac{\length_R(M/\fm^n M)}{n^{t}},$$ 
which is a positive integer; see, for example, \cite[Page 107]{Mat}. 
\end{enumerate}
\end{chunk}

\begin{chunk}  \label{DS} Let $(R,\fm, k)$ be a local ring and let $X$, $N$ be $R$-modules. 
\begin{enumerate}[\ \rm(1)]
\item Assume $\Soc_R(X)\nsubseteq \fm X$ and let $x\in \Soc_R(X) - \fm X$. This implies that $Rx \cong k$ and the nonzero map $Rx \hookrightarrow X \twoheadrightarrow X/ \fm X$, $x \mapsto \overline{x}$, splits. Therefore, $Rx \hookrightarrow X$ splits and thus $k$ is a direct summand of $X$ as an $R$-module. 
\item Assume $R$ has prime characteristic $p$ and $X=\up{e}M$ for some $R$-module $M \neq 0$ and $e \geq 0$.
\begin{enumerate}[\rm(i)]
\item If $\Soc_R(X)\nsubseteq \fm X$, that is, $\up{e}(0:_M \fm^{[p^e]}) \nsubseteq \up{e}\big(\fm^{[p^e]}M\big)$, then part (1) shows that $k$ is a direct summand of $\up{e}M$ as an $R$-module.
\item Assume that $M$ is finitely generated over $R$ with $\depth_R(M)=0$. For all $e \gg 0$, we have $\up{e}(0:_M \fm^{[p^e]})=\Soc_R(\up{e}M) \supseteq \up{e}\big(\Soc_R(M)\big) \nsubseteq \up{e}\big(\fm^{[p^e]}M\big)$. Hence, part~(i) shows that $k$ is a direct summand of $\up{e}M$ for every $e \gg 0$.
\item Assume that $(R,\fm,k)$ is Artinian. Then $\up{e}(0:_M \fm^{[p^e]}) \nsubseteq \up{e}\big(\fm^{[p^e]}M\big)$ for all $e \gg 0$. By part~(i), $k$ is a direct summand of $\up{e}M$ for all $e \gg 0$. Thus, given $i \ge 1$, if $\Ext^i_R(\up{e}M, N)=0$ for some $e \gg 0$, then $\Ext^i_R(k,N)=0$ and hence $N$ is injective; see \cite[3.1.12]{BH} and also \cite[2.0.10]{FunkThesis}.
\end{enumerate}
\end{enumerate}
\end{chunk}

\begin{chunk} \label{finite} Let $(R,\fm, k)$ be a local ring of prime characteristic $p$. 
There exists a local flat ring homomorphism  $(R,\fm,k) \to (S,\fn,\ell)$ such that $S$ is F-finite, $\fm S = \fn$ and $|\ell| = \infty$; hence $S$ is faithfully flat over $R$, $\dim(S) = \dim(R)$ and $\e(R) = \e(S)$; it also follows that $R$ is Cohen-Macaulay (respectively, regular) if and only if $S$ is Cohen-Macaulay (respectively, regular). (For example, with $\wh R \cong k[\![x_1,\dotsc,x_m]\!]/I$,  we can pick $S = \ov k[\![x_1,\dotsc,x_m]]/I\ov k[\![x_1,\dotsc,x_m]\!]$, where $\ov k$ is the algebraic closure of $k$.) In the case where $S$ is such an extension of $R$ and $M$ is a finitely generated $R$-module, it follows that $M$ is a Cohen-Macaulay $R$-module with $\Supp_R(M) = \Spec(R)$ if and only if $M \otimes_R S$ is a Cohen-Macaulay $S$-module with $\Supp_S(M \otimes_R S) = \Spec(S)$; also, $M$ is free over $R$ if and only if $M \otimes_R S$ is free over $S$.
\end{chunk}

\begin{chunk} \label{bakbi} Let $(R,\fm, k)$ be a local ring of prime characteristic $p$ and let $M \neq 0$ be a finitely generated $R$-module. We set: 
\begin{align*}
 \crsp(M) = & \min\{e\geq 0 \mid (0:_{M/ \x M)} \fm^{[p^e]}) \nsubseteq \fm^{[p^e]} (M/ \x M)  \text{ for an 
$M$-regular sequence } \x\}, \\
 \drsp(M) = & \min\{e\geq 0 \mid \fm^{[p^e]} M \subseteq (\underline{\mathbf{x}}) M \text{ for a system of parameters } \x \text{ on } M \}.
\end{align*}
It seems unknown whether or not $\sup\{\crsp(M_\fp) \mid \fp \in \Supp_R(M)\}$ is finite in general.
\end{chunk}

\begin{chunk} \label{bakbi2} Let $(R,\fm, k)$ be a local ring of prime characteristic $p$ and let $M \neq 0$ be a finitely generated Cohen-Macaulay $R$-module. Then $\crsp(M) \le \drsp(M)$. If $|k|= \infty$, then $\drsp(M) \le \lceil \log_p \e_R(M)\rceil$; see, for example, \cite[2.4]{CSY}.  Therefore, we have
\begin{align*}
\sup\big\{\crsp(M_\fp) \mid \fp \in \Supp_R(M)\big\} & \le
\begin{cases}\max\left\{ \lceil \log_p \e_R(M)\rceil,\, \crsp(M)\right\} \\
\lceil \log_p \e_R(M)\rceil \qquad \text{if }\ |k|= \infty
\end{cases}, \\
\sup\big\{\drsp(M_\fp) \mid \fp \in \Supp_R(M)\big\} &\le 
\begin{cases}\max\left\{ \lceil \log_p \e_R(M)\rceil,\, \drsp(M)\right\} \\
\lceil \log_p \e_R(M)\rceil \qquad \text{if }\ |k|= \infty
\end{cases}.
\end{align*}
\end{chunk}

\begin{chunk} \label{RS} Let $R \to S$ be a ring homomorphism, $M$ be an $S$-module, and let $X$ be an $R$-module. 
\begin{enumerate}[\rm(i)]
\item $\Hom_R(M, X)$ has an $S$-module structure as follows: For $s\in S$ and $\alpha \in \Hom_R(M, X)$, we define $s \cdot \alpha \in \Hom_R(M, X)$ as $(s \cdot \alpha)(m)=\alpha(sm)$ for all $m\in M$.
\item $\Hom_R(X, M)$ has an $S$-module structure as follows: For $s\in S$ and $\alpha \in \Hom_R(X, M)$, we define $s \cdot \alpha \in \Hom_R(X, M)$ as $(s \cdot \alpha)(n)=s \alpha(x)$ for all $x\in X$.
\item $M \otimes_R X$ has an $S$-module structure as follows: For $s\in S$ and $\sum_i m_i \otimes x_i \in M \otimes_R X$, with $m_i \in M$ and $x_i \in X$, we define $s \cdot \left (\sum_i m_i \otimes x_i \right)= \sum_i (sm_i) \otimes x_i \in M \otimes_R X$.
\end{enumerate}
\end{chunk}

The following observation is used in several proofs in the sequel. Note that, over a Noetherian ring, every finitely generated module is finitely presented.

\begin{chunk}\label{tor-ext-duality-general}\label{tor-ext-duality} Let $f: R \to S$ be a ring homomorphism, $A$ be an $R$-module, and let $B$ be an $S$-module. Assume $E$ is an injective $S$-module. Given $n\geq 0$, the following hold:
\begin{enumerate}[\rm(i)]
\item $\Hom_S\big(\Tor^R_n(A, B), E\big) \cong \Ext^n_R\big(A, \Hom_S(B, E)\big)$; see \cite[10.63]{Roit}.
\item If $A$ is finitely presented over $R$, then $\Hom_S\big(\Ext_R^n(A, B), E\big) \cong \Tor_n^R\big(A, \Hom_S(B, E)\big)$. 
\item If $(R, \fm, k)$ is local, $R=S$, and $f=\id$, then parts (i) and (ii) imply that:
\[\fd_R(B) = \id_R(B^\vee) \quad \text{ and } \quad  \id_R(B) = \fd_R(B^\vee).\]
\end{enumerate}
\end{chunk}

\begin{chunk} [{Auslander-Buchsbaum \cite{HD2}}] \label{AB2} Let $(R,\fm, k)$ be a $d$-dimensional local ring and let $N$ be an $R$-module. If $\id_R(N)<\infty$, then $\id_R(N)\leq d$. Thus, if $\fd_R(N)<\infty$, then $\fd_R(N)\leq d$; see~\ref{tor-ext-duality-general}(iii). 
\end{chunk}

The next observation is used in the proofs of Lemma~\ref{lem:d+1} and Proposition~\ref{prop:injective}.

\begin{chunk} \label{rmkinj} Let $(R,\fm, k)$ be a $d$-dimensional local ring and let $N$ be an $R$-module. 
Consider a minimal injective resolution of $N$:
$$\I=\ (0\xra{\ \ \ } I^0 \xra{\;h_1\;} I^1 \xra{\;h_2\;} \cdots \xra{\;h_n\;} I^n \xra{h_{n+1}} I^{n+1} \xra{\ \ \ } \cdots),$$
where $\displaystyle{I^j=\bigoplus_{\fp \in \Spec(R)}E_R(R/\fp)^{\oplus \mu_j(\fp, N)}}$ and $\mu_j(\fp, N)=\rank_{k(\fp)}\big(\Ext_{R_{\fp}}^j(k(\fp),N)\big)$ for $j\geq 0$. Here, $\mu_j(\fp, N)$ is not necessarily finite. Note that $\Hom_{R_\fp}(k(\fp), (h_j)_\fp) =0$ for $j \ge 0$. If $\fq \in \Spec(R)$ and we localize $\I$ at $\fq$, then the resulting $\I_{\fq}$ is a minimal injective resolution of $N_{\fq}$ over $R_{\fq}$ and $\displaystyle{I^j_{\fq}=\bigoplus_{\fp \subseteq \fq}E_R(R/\fp)^{\oplus \mu_j(\fp, N)}}$ for all $j\geq 0$; see, for example, \cite{BassU} and \cite[3.15 and Appendix 20-24]{LCbook}.

Assume $\id_{R_{\fp}}(N_{\fp})<\infty$ for all $\fp \in \Spec(R)-\{\fm\}$. Then $\id_{R_{\fp}}(N_{\fp})\leq \dim(R_{\fp})\leq d-1$ for all $\fp \in \Spec(R)-\{\fm\}$; see \ref{AB2}. This implies that  $I^i =  E_R(k)^{\oplus \mu_i(\fm, N)}$ for all $i \geq d$. Consequently, $\I$ has the following form:
$$\I=\ (0\to I^0 \to \cdots \to I^{d-1} \to E_R(k)^{\oplus \mu_d(\fm, N)} \xra{h_{d+1}} E_R(k)^{\oplus \mu_{d+1}(\fm, N)} \to \cdots).$$
\end{chunk}

\begin{chunk} \label{hom-tensor=0} Let $(R,\fm,k)$ be a local ring, $I$ be an ideal of $R$ such that $\sqrt{I}=\sqrt{\Ann_R(M)}$, $M$ be a finitely generated $R$-module, and let $N$ be an $R$-module. Then,
\begin{enumerate}[\rm(i)]
\item $\Hom_R(M,N)=0 \iff \gr_R(I, N) \geq 1 \iff \Hom_R(R/I,N) = 0$; see \cite[1.2.3 or 1.2.10(e)]{BH}.
\item It follows from part (i) and \ref{tor-ext-duality}(i) that 
\begin{align}\notag{}
\notag{} M \otimes_R N = 0 & \iff  \Hom_R(M, N^\vee) = 0  \iff   \Hom_R(R/I, N^\vee) = 0   \\ & \notag{} \iff  (R/I \otimes_R N)^\vee = 0 \iff R/I \otimes_R N = 0 \iff N=I N. 
\end{align}
\item If $\Supp_R(M) = \Spec(R)$ (or equivalently, $I=0$), it follows from parts (i) and (ii) that $$\Hom_R(M,N)=0 \iff M \otimes_R N = 0 \iff N = 0.$$
\item $\Ass_R(\Hom_R(M,N)) = \Supp_R(M) \cap \Ass_R(N)$; see, for example, \cite[1.2.28]{BH}.
\end{enumerate}
\end{chunk}

\begin{rmk} Assume $I$, $M$, and $N$ are as in \ref{hom-tensor=0}, but $R$ is (Noetherian as always) not necessarily local. Then the implications considered in \ref{hom-tensor=0} still hold since they can all be verified locally. More precisely, $\Hom_R(M,N)=0  \iff  \Hom_R(R/I,N) = 0  \text{ \;\;and\;\; }   M \otimes_R N = 0   \iff  N=IN.$
Also, we have that: $\Supp_R(M) = \Spec(R) \Longrightarrow \Hom_R(M,N)=0 \iff  N = 0   \iff  M \otimes_R N = 0$.
These implications establish the fact $\Ass_R\big(\Hom_R(M,N)\big) = \Supp_R(M) \cap \Ass_R(N)$, namely the equality stated in \ref{hom-tensor=0}(iv), still holds.
\end{rmk}

\begin{chunk}\label{AuBrsequence}  Let $(R, \fm, k)$ be a local ring and let $M$ be a finitely generated $R$-module. Consider a minimal free representation $P_1 \xra{\partial_1} P_0 \to M \to 0$. The \emph{transpose} $\Tr_R M$ of $M$ is defined as the cokernel of the $R$-dual map $\partial_1^{\ast}=\Hom_{R}(\partial_1,R)$. We refer the reader to \cite{AuBr} for the details of the following:
\begin{enumerate}[\rm(i)]
\item There is an exact sequence of $R$-modules $0\rightarrow M^*\rightarrow P_0^*\rightarrow P_1^*\rightarrow \Tr_R M\rightarrow 0$.
\item It follows that, up to isomorphism, $\Tr_R M$ is uniquely determined. 
\item It follows that, up to projectives, $\Tr_R (\Tr_R M) \cong M$.
\item $M$ is free if and only if $\Tr_R M$ is free.
\end{enumerate}
\end{chunk}

The following result from \cite{CSY} is necessary for our proof of Theorem~\ref{CSY-gen-intro}. 

\begin{chunk} [{\cite[2.2]{CSY}}] \label{CSY0} Let $(R,\fm, k)$  be a local ring and let $M$ and $N$ be finitely generated $R$-modules.
Assume $n\geq 1$ is an integer. Assume further the following conditions hold:
\begin{enumerate}[\rm(i)]
\item $N_{\fp}$ is free for all $\fp \in \Spec(R)-\{\fm\}$.
\item $\depth_R(M\otimes_RN)\geq n$.
\item $\depth_R(M)\geq n-1$.
\end{enumerate}
Then $\Ext^i_R(\Tr_R N,M)=0$ for all $i=1, \ldots, n$.
\end{chunk}


\section{A generalization of a result of Funk-Marley}\label{sec:Frob-inj}

Let us start by recalling the result of Funk-Marley stated in the introduction; see \ref{FM-intro}.

\begin{chunk} [{Funk-Marley \cite[3.1 and 3.2]{FM}}] \label{FM-section3} Let $(R,\fm, k)$ be a $d$-dimensional Cohen-Macaulay local ring of prime characteristic $p$, with $d\geq 1$, and let $N$ be an $R$-module. Given integers $t\geq 1$ and $e \gg 0$, the following hold: 
\begin{enumerate}[\rm(i)]
\item If $\Tor_i^R(\up{e}R, N)=0$ for all $i=t,\, \dots,\,
t+r-1$, then $\fd_R(N) \le d$. 
\item If $R$ is F-finite and $\Ext^i_R(\up{e}R, N)=0$ for all $i=t,\, \dots,\,
t+r-1$, then $\id_R(N)\leq d$. 
\end{enumerate}
\end{chunk}

The original statement of \ref{FM-section3} in \cite{FM} includes the case where $d=0$; in fact this case follows from the techniques of Koh-Lee used in the proof of \cite[2.6]{KL}; see also \cite[2.8]{FM} and \cite[2.2.8]{Millercont}. Note also that \cite[3.1 and 3.2]{FM}, namely \ref{FM-section3}, is stated in terms of complexes of $R$-modules, but its proof naturally reduces to the case of modules. For this reason, we consider only modules when generalizing \ref{FM-section3} in Theorems~\ref{injective} and ~\ref{p1}, which can also be extended to the complex case in a similar manner, as explained in the proof of \cite[3.1]{FM}.

The main results of this section are captured in the following theorem, featured in the introduction as parts (i) and (ii) of Theorem~\ref{main-thm-intro}. 

\begin{thm} \label{injective} 
Let $(R,\fm, k)$ be a $d$-dimensional local ring of prime characteristic $p$, $M$ be a finitely generated Cohen-Macaulay $R$-module such that $\Supp_R(M) = \Spec(R)$, and let $N$ be an $R$-module. Assume $\depth(R) \geq 1$. Given $t\geq 1$ and $e \gg 0$, the following hold: 
\begin{enumerate}[\rm(i)]
\item If $\Tor^R_i(\up{e}M, N)=0$ for all $i=t,\, \dots,\,t+d-1$, then $\fd_R(N) \leq d$. 
\item If $R$ is F-finite and $\Ext^i_R(\up{e}M, N)=0$ for all $i=t,\, \dots,\,t+d-1$, then $\id_R(N)\leq d$. 
\end{enumerate}
\end{thm}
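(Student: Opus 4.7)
The plan is to reduce to the F-finite setting, deduce part (i) from part (ii) by Matlis duality, and prove part (ii) by induction on $d = \dim R$. First, by the faithfully flat local extension $R \to S$ from \ref{finite}, I would pass to the case where $R$ is F-finite with infinite residue field; the compatibility $\up{e}M \otimes_R S \cong \up{e}(M \otimes_R S)$ as $S$-modules (from the commuting square of Frobenii on $R$ and $S$) transfers the Tor/Ext vanishing hypotheses by flatness, while the finiteness of $\fd_R(N)$ and $\id_R(N)$ descends along the faithfully flat map, so by \ref{AB2} it suffices to prove that the relevant homological dimension is finite. Next, by \ref{tor-ext-duality}(i) and the faithful exactness of $\Hom_R(-, E_R(k))$, one has $\Tor^R_i(\up{e}M, N) = 0$ if and only if $\Ext^i_R(\up{e}M, N^\vee) = 0$, and by \ref{tor-ext-duality}(iii), $\fd_R(N) = \id_R(N^\vee)$. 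Hence part (i) follows from part (ii) applied to $N^\vee$ in place of $N$.

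For part (ii), I induct on $d$. In the base case $d = 1$, the hypothesis $\depth R \geq 1 = d$ forces $R$ to be Cohen-Macaulay, and every non-maximal prime $\fp$ is minimal, so $R_\fp$ is Artinian with $M_\fp \neq 0$ still Cohen-Macaulay. Localizing the hypothesis, $\Ext^t_{R_\fp}(\up{e}M_\fp, N_\fp) = 0$, so by \ref{DS}(2)(iii) $N_\fp$ is injective over $R_\fp$. By \ref{rmkinj}, the minimal injective resolution of $N$ then has the form $0 \to I^0 \to E_R(k)^{\oplus \mu_1} \to E_R(k)^{\oplus \mu_2} \to \cdots$, and it suffices to show $\mu_j(\fm, N) = 0$ for $j \geq 2$. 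To that end, I would pick $x \in \fm$ that is both $R$-regular and $M$-regular (possible by prime avoidance using $\depth R \geq 1$ and $\depth M = 1$), form the short exact sequence $0 \to \up{e}M \xrightarrow{\cdot x} \up{e}M \to \up{e}(M/x^{p^e}M) \to 0$ coming from multiplication by $x$ in the twisted scalar action, and observe that $M/x^{p^e}M$ has depth zero. By \ref{DS}(2)(ii) together with the bounds in \ref{bakbi} and \ref{bakbi2}, $k$ appears as a direct summand of $\up{e}(M/x^{p^e}M)$ for $e \gg 0$, so the long exact Ext sequence propagates the vanishing $\Ext^t_R(\up{e}M, N) = 0$ to $\Ext^t_R(k, N) = 0$, giving $\mu_t(\fm, N) = 0$ for $t \geq 2$.

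For the inductive step $d \geq 2$, pick $x \in \fm$ both $R$-regular and $M$-regular and use the same short exact sequence to produce $d-1$ consecutive vanishings of $\Ext^i_R(\up{e}(M/x^{p^e}M), N)$. Since $x$ annihilates $\up{e}(M/x^{p^e}M)$, this module is naturally an $R' = R/xR$-module, and a change-of-rings argument (valid since $\pd_R R' = 1$, so $\Ext^q_R(R', N)$ vanishes for $q \geq 2$) transfers the vanishing to Ext over $R'$ computed against $(0:_N x)$ and $N/xN$. When $\depth R \geq 2$, we have $\depth R' \geq 1$ and $M/xM$ is Cohen-Macaulay over $R'$ with $\Supp_{R'}(M/xM) = \Spec(R')$, so the inductive hypothesis over $R'$ applies and lifts back to $\id_R(N) \leq d$. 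The hard part is the case $\depth R = 1$ with $d \geq 2$: here $\depth R' = 0$ and the inductive hypothesis is unavailable. My plan for this case is to carry out the reduction along the entire $M$-regular system of parameters $x_1, \ldots, x_d \in \fm$ (which exists since $M$ is Cohen-Macaulay with full support, and $x_1$ can be chosen $R$-regular by prime avoidance) and exploit the depth-zero quotient $M/(x_1, \ldots, x_d)M$ via \ref{DS}(2)(ii): for $e \gg 0$, $k$ is a direct summand of a suitable Frobenius twist of this quotient. Propagating the original $d$ consecutive Ext vanishings through iterated long exact sequences then forces $\Ext^{t+j}_R(k, N) = 0$ for $j = 0, \ldots, d-1$, so that \ref{rmkinj} combined with the vanishing of the Bass numbers $\mu_{t+j}(\fm, N)$ bounds $\id_R(N)$ by $d$.
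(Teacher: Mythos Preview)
Your reduction to the F-finite case and the derivation of part~(i) from part~(ii) via Matlis duality match the paper's approach. The gap lies in your proof of part~(ii): the ``propagation through regular sequences'' argument cannot work with only $d$ consecutive vanishings.

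Concretely, from the short exact sequence $0 \to \up{e}M \to \up{e}M \to \up{e}(M/x_1M) \to 0$ and the vanishing of $\Ext^i_R(\up{e}M,N)$ for $i=t,\dots,t+d-1$, the long exact sequence yields only $\Ext^i_R(\up{e}(M/x_1M),N)=0$ for $i=t+1,\dots,t+d-1$, i.e.\ $d-1$ vanishings. After $d-1$ such steps you are left with a single vanishing for a depth-$1$ quotient, and after $d$ steps you have none at all. So in your base case $d=1$ you cannot conclude $\Ext^t_R(k,N)=0$ from the single hypothesis $\Ext^t_R(\up{e}M,N)=0$, and in your ``hard case'' the claim that one obtains $\Ext^{t+j}_R(k,N)=0$ for $j=0,\dots,d-1$ is unfounded---that would manufacture $d$ vanishings out of none. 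Your argument is essentially the paper's Proposition~\ref{d+1}, which genuinely requires $d+1$ vanishings.

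What the paper does to save one vanishing is substantially different (Proposition~\ref{prop:injective}). First, Proposition~\ref{d+1} applied at each $\fp \neq \fm$ (where $\dim R_\fp \le d-1$, so $d$ vanishings suffice there) gives $\id_{R_\fp}(N_\fp)<\infty$ and, via duality, $\fd_{R_\fp}((N^\vee)_\fp)<\infty$. Then, rather than trying to force a Bass number to vanish by reaching $k$, the paper analyzes the map $g_{t+d}=\Hom_R(\up{e}M,h_{t+d})$ in the complex $\Hom_R(\up{e}M,\I)$: propagation through the full regular sequence $\x$ shows $\Hom_R(\up{e}(M/(\x)M),h_{t+d})=0$ by minimality of $\I$, and since $\fm(\up{e}M)\subseteq (\x)(\up{e}M)$ this forces $g_{t+d}=0$ via a Nakayama-type argument over the target ring of $F^e$. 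The resulting surjectivity of $g_{t+d-1}$ is then dualized and compared, at each $\fp\in\Ass(R)$, with the flat resolution of $(N^\vee)_\fp$ to conclude that the $(t+d-1)$st injective cosyzygy of $N$ vanishes. The hypothesis $\depth(R)\ge 1$ enters precisely to ensure $\Ass(R)\subseteq\Spec(R)\setminus\{\fm\}$, so that the local finiteness from Proposition~\ref{d+1} is available at every associated prime. Your proposal does not contain this dual argument, and the change-of-rings sketch in your inductive step (which would require controlling both $\Ext_{R'}(-,(0:_Nx))$ and $\Ext_{R'}(-,N/xN)$ simultaneously from vanishing over $R$) does not close the gap either.
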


\begin{rmk} Let $R$ be a ring of prime characteristic $p$, $M$ be a finitely generated $R$-module such that $\Supp_R(M) = \Spec(R)$, and let $N$ be an $R$-module. Let $e \ge 0$. 
\begin{enumerate}[\rm(i)]
\item Assume $R$ is F-finite. Then $\Hom_R(\up{e}M, N) =0 \implies N = 0$; see~\ref{hom-tensor=0}(iii). 
\item Without the F-finite assumption, we have $\up{e}M \otimes N = 0 \implies N = 0$, as locally this reduces to the F-finite case and then follows from~\ref{hom-tensor=0}(iii). 
\end{enumerate}
Thus, Theorem~\ref{injective} still holds when $t = 0$. This relies on $\Supp_R(\up{e}M) = \Spec(R)$ and does not depend on the choice of $e$ or $M$ being Cohen-Macaulay.
\end{rmk}

Theorem~\ref{injective} generalizes \ref{FM-section3} in the case where $\depth(R) \ge 1$. For a generalization of \ref{FM-section3} in the case where $\depth(R)=0$ (that is, the $d=0$ case of \ref{FM-section3}), see Proposition~\ref{d+1}. 
Before presenting our \hyperref[proof:injective]{proof of Theorem~\ref*{injective}} at the end of the section, we would like to discuss the sharpness of the result and list some corollaries of the theorem. We will also prove Propositions~\ref{d+1} and \ref{prop:injective}, which the \hyperref[proof:injective]{proof of Theorem~\ref*{injective}} relies on.

The following example shows that the positive depth assumption on the ring is necessary for Theorem~\ref{injective}. 

\begin{eg}\label{ex:ext-2} Let $R=\mathbb{F}_p[\![x,y]\!]/(x^2,xy)$ and let $M=R/(x)$. Then $R$ is F-finite, $\depth(R)=0$, $\dim(R) = 1$, and $M$ is a Cohen-Macaulay $R$-module such that $\Supp_R(M)=\Spec(R)$. Moreover, $\up{e}M \cong M^{\oplus p^e}$ for all $e\geq 0$.

Let $N=R/(y)$. Then $\Tor_1^R(M,N)=0$ so that $\Tor_1^R(\up{e}M,,N)=0$ for all $e\geq 0$. If $\pd_R(N)<\infty$, then $N$ is free since $\depth(R)=0$. Hence $\pd_R(N)=\fd_R(N)=\infty$. This shows that the positive depth assumption is needed for Theorem~\ref{injective}(i).

Next let $N=M$. Then $\Ext^1_R(M,N)=0$ so that $\Ext^1_R(\up{e}M, M)=0$ for all $e\geq 0$. If $\id_R(N)<\infty$, then $\id_R(N)=\depth(R)=0$, that is, $N$ is injective. Hence, $\id_R(N)=\infty$ (one can also conclude that $\id_R(N)=\infty$ since $R$ is not Cohen-Macaulay). This shows that the positive depth assumption is needed for Theorem~\ref{injective}(ii).
\end{eg}

We give several corollaries of Theorem~\ref{injective}. The next corollary covers the particular case where $S = \wh R$, the $\fm$-adic completion of $(R,\fm)$.

\begin{cor}\label{cor:injective} Let $f: (R,\fm,k) \to (S,\fn,\ell)$ be a flat local ring homomorphism, where $R$ is a $d$-dimensional local ring of prime characteristic $p$ and $\fm S = \fn$,
$M$ be a finitely generated Cohen-Macaulay $R$-module such that $\Supp_R(M)=\Spec(R)$, and let $N$ be a finitely generated $S$-module. Assume $\depth(R)\geq 1$. Given $t\geq 1$ and $e \gg 0$, if $\Tor_i^R(\up{e}M, N)=0$ for all $i = t, \dotsc, t+d-1$, then $\pd_{S}(N)\leq d$.
\end{cor}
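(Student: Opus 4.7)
The plan is to reduce the corollary to Theorem~\ref{injective}(i) and then transfer finite flat dimension over $R$ to finite projective dimension over $S$, using flat base change for $\Tor$ together with the hypothesis $\fm S = \fn$.

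First I would view $N$ as an $R$-module through $f$ (it need not be finitely generated over $R$) and verify that the hypotheses of Theorem~\ref{injective}(i) are all satisfied: $R$ is a $d$-dimensional local ring of prime characteristic $p$ with $\depth(R) \geq 1$, $M$ is a finitely generated Cohen--Macaulay $R$-module with $\Supp_R(M) = \Spec(R)$, and $\Tor^R_i(\up{e}M, N) = 0$ for $i = t, \dots, t+d-1$ by assumption. Theorem~\ref{injective}(i) therefore yields $\fd_R(N) \leq d$, and in particular $\Tor^R_i(k, N) = 0$ for all $i > d$.

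Next I would establish the natural isomorphism $\Tor^S_i(\ell, N) \cong \Tor^R_i(k, N)$ for all $i \geq 0$. Pick a projective resolution $P_\bullet \to N$ of $N$ over $S$. Each $P_j$ is $S$-flat, and since $f$ is flat, $P_j$ is also $R$-flat; hence $P_\bullet$ is likewise an $R$-flat resolution of $N$. Using $\fm S = \fn$, for each $j$ we have
\[
k \otimes_R P_j \;\cong\; (k \otimes_R S) \otimes_S P_j \;=\; (S/\fm S) \otimes_S P_j \;=\; \ell \otimes_S P_j,
\]
and these isomorphisms are compatible with the differentials, giving an isomorphism of complexes $k \otimes_R P_\bullet \cong \ell \otimes_S P_\bullet$. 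Taking homology produces the claimed isomorphism. Consequently $\Tor^S_i(\ell, N) = 0$ for all $i > d$, and since $N$ is finitely generated over the local ring $S$, the standard residue-field criterion for projective dimension gives $\pd_S(N) \leq d$.

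There is really no hard step here: the result is essentially immediate from Theorem~\ref{injective}(i) combined with flat base change for $\Tor$. The one point that has to be used carefully is the hypothesis $\fm S = \fn$, which is precisely what ensures that the base change of the residue field $k$ is the residue field $\ell$ of $S$ rather than a larger ring; without it, one would still obtain $\fd_R(N) \leq d$ but not the desired conclusion about $\pd_S(N)$.
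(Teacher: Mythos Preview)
Your proof is correct and follows essentially the same approach as the paper: apply Theorem~\ref{injective}(i) to obtain $\fd_R(N)\le d$, and then use the isomorphism $\Tor_i^R(k,N)\cong\Tor_i^S(\ell,N)$ (which relies on $\fm S=\fn$ and flatness of $f$) to convert this into $\pd_S(N)\le d$. The paper states the $\Tor$ isomorphism without justification and notes slightly more, namely that $\pd_S(N)=\fd_R(N)$, but the argument is the same.
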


\begin{proof} Observe $\Tor_{i}^R(k,N) \cong \Tor_{i}^S(\ell,N)$ for all $i\geq 0$. By Theorem~\ref{injective}, we have that $\fd_R(N) \leq d$.  
Set $r=\fd_R(N)$. It follows that $\Tor_{r}^R(k,N)\neq 0=\Tor_{r+1}^R(k,N)$. Therefore, $\pd_{S}(N) = r\leq d$.
\end{proof}

\begin{cor} \label{torla} Let $(R,\fm, k)$ be a $d$-dimensional local ring of prime characteristic $p$, $M$ be a finitely generated Cohen-Macaulay $R$-module such that $\Supp_R(M) = \Spec(R)$, and let $N$ be an $R$-module. Assume $\depth(R)\geq 1$. Given $t\geq 1$ and $e \gg 0$, if $\Ext_R^i(\up{e}M, N^\vee) =0$ for all $i=t,\, \dots,\,t+d-1$, then $\id_R(N^\vee) \leq d$. 
\end{cor}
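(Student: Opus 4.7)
The plan is to reduce the statement directly to Theorem~\ref{injective}(i) via Matlis duality. Specifically, apply the first isomorphism in \ref{tor-ext-duality-general}, taking $S = R$, $f = \id_R$, $A = \up{e}M$, $B = N$, and the injective module $E = E_R(k)$, to obtain natural isomorphisms
$$\Hom_R\bigl(\Tor_i^R(\up{e}M, N),\, E_R(k)\bigr) \cong \Ext_R^i\bigl(\up{e}M,\, N^\vee\bigr)$$
for every $i \geq 0$. Since $\Hom_R(-, E_R(k))$ is faithfully exact (see \ref{Begin}(i)), the vanishing of the right-hand side for $i = t,\dots,t+d-1$ forces $\Tor_i^R(\up{e}M, N) = 0$ in the same range.

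At that point, Theorem~\ref{injective}(i) applies and gives $\fd_R(N) \leq d$. The proof finishes by invoking the identity $\id_R(N^\vee) = \fd_R(N)$ recorded in \ref{tor-ext-duality-general}(iii), yielding $\id_R(N^\vee) \leq d$. There is no genuine obstacle here: the corollary is a formal consequence of the Matlis-duality framework set up in Section~2 combined with Theorem~\ref{injective}(i), and the threshold "$e \gg 0$" is inherited directly from that theorem without adjustment.
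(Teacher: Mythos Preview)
Your proposal is correct and follows essentially the same route as the paper: both arguments use the duality isomorphism in \ref{tor-ext-duality}(i) to pass from the vanishing of $\Ext_R^i(\up{e}M, N^\vee)$ to the vanishing of $\Tor_i^R(\up{e}M, N)$, then invoke Theorem~\ref{injective}(i) and the identity $\id_R(N^\vee) = \fd_R(N)$ from \ref{tor-ext-duality}(iii). Your version merely spells out the faithfulness of $\Hom_R(-,E_R(k))$ more explicitly.
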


\begin{proof} 
The vanishing of $\Ext_R^i(\up{e}M, N^\vee) $  yields the vanishing of $\Tor^R_i(\up{e}M, N)$; see \ref{tor-ext-duality}(i). Thus $\id_R(N^\vee) = \fd_R(N) \leq d$ by Theorem~\ref{injective}(i) and \ref{tor-ext-duality}(iii).
\end{proof}

\begin{cor}\label{cor:dual-injective} Let $(R,\fm, k)$ be a $d$-dimensional local ring of prime characteristic $p$, $M$ be a finitely generated Cohen-Macaulay $R$-module such that $\Supp_R(M)=\Spec(R)$, and let $N$ be an $R$-module. Assume $\depth(R)\geq 1$. Given $t\geq 1$ and $e \gg 0$, assume at least one of the following conditions holds:
\begin{enumerate}[\rm(i)]
\item $\Ext^i_R\Big(N,\, \big(\up{e}M\big)^\vee\Big)=0$ for all $i = t, \dotsc, t+d-1$.
\item $\Ext^i_R\Big(N,\, \up{e}\big(M^\vee\big)\Big)=0$ for all $i = t, \dotsc, t+d-1$. 
\end{enumerate}
Then $\fd_R(N) \leq d$.
\end{cor}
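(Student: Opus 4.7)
The plan is to reduce both hypotheses (i) and (ii) to the Tor vanishing condition of Theorem~\ref{injective}(i), and then invoke that theorem directly. In both cases, the reduction is achieved through the Ext-Tor duality \ref{tor-ext-duality-general}(i) together with the faithful exactness of Matlis duality recorded in \ref{Begin}(i).

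For case (i), I would apply \ref{tor-ext-duality-general}(i) with $f=\id_R$, $A = N$, $B = \up{e}M$, and $E = E_R(k)$. This produces the natural isomorphism
\[
\Ext^i_R\big(N,\, (\up{e}M)^\vee\big) \;\cong\; \big(\Tor^R_i(N, \up{e}M)\big)^\vee.
\]
Since $(-)^\vee$ is faithfully exact, the hypothesized vanishing in the range $i = t, \dots, t+d-1$ is equivalent to $\Tor^R_i(\up{e}M, N) = 0$ in the same range, and Theorem~\ref{injective}(i) then yields $\fd_R(N) \leq d$.

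For case (ii), I would apply \ref{tor-ext-duality-general}(i) to the Frobenius $f = F^e \colon R \to S = R$, with $A = N$, $B = M$ (as $S$-module), and $E = E_R(k) = E_S(k)$ (injective as $S$-module). Unpacking the definitions via \ref{RS}, restricting $B$ along $f$ returns the $R$-module $\up{e}M$, while restricting $\Hom_S(B, E) = M^\vee$ along $f$ returns exactly $\up{e}(M^\vee)$. Consequently,
\[
\Ext^i_R\big(N,\, \up{e}(M^\vee)\big) \;\cong\; \Hom_S\big(\Tor^R_i(N, \up{e}M),\, E_R(k)\big),
\]
and the vanishing of the left side over $i = t, \dots, t+d-1$ forces $\Tor^R_i(\up{e}M, N) = 0$ in that range, so Theorem~\ref{injective}(i) again delivers $\fd_R(N) \leq d$.

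The only step that demands genuine care is the Frobenius bookkeeping in case (ii): verifying that the restriction of $\Hom_S(M, E_R(k))$ along $F^e$ coincides with the Frobenius twist $\up{e}(M^\vee)$ rather than with a distinct object such as $(\up{e}M)^\vee$. Once this identification is settled via \ref{RS}, both parts of the corollary collapse to a single application of Ext-Tor duality followed by Theorem~\ref{injective}(i).
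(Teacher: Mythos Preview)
Your proposal is correct and follows essentially the same approach as the paper: both cases reduce to the Tor vanishing hypothesis of Theorem~\ref{injective}(i) via the Ext--Tor duality of \ref{tor-ext-duality-general}(i), with case~(i) using $f=\id_R$ and case~(ii) using $f=F^e$. Your explicit unpacking of the Frobenius bookkeeping in case~(ii) --- identifying the restriction of $\Hom_S(M,E_R(k))$ along $F^e$ with $\up{e}(M^\vee)$ --- is exactly the point the paper leaves implicit when it simply cites \ref{tor-ext-duality-general}(i).
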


\begin{proof} 
The vanishing of $\Ext^i_R\big(N, (\up{e}M)^\vee\big)$ yields the vanishing of $\Tor_i^R\big(N, \up{e}M\big)$; see \ref{tor-ext-duality}(i). Hence case~(i) follows from Theorem~\ref{injective}(i).

Similarly, the vanishing of $\Ext^i_R\big(N,\, \up{e}(M^\vee)\big)$ yields the vanishing of $\Tor_i^R\big(N, \up{e}M\big)$; see \ref{tor-ext-duality-general}(i). Therefore, case~(ii) also follows from Theorem~\ref{injective}(i).
\end{proof}

Next, we prepare some auxiliary results for the \hyperref[proof:injective]{proof of Theorem~\ref*{injective}}. To begin with, we present Lemma~\ref{lem:d+1} and Corollary~\ref{cor:d+1} which are akin to \cite[4.5 and 4.6]{FM}.

\begin{lem} \label{lem:d+1}  Let $(R,\fm, k)$ be a $d$-dimensional F-finite local ring of prime characteristic $p$, $M$ be a finitely generated $R$-module such that $\Supp_R(M) = \Spec(R)$, and let $N$ be an $R$-module. Set $\delta =\max\{\depth_{R_\fp}(M_\fp) \mid \fp \in \Spec(R)\}$ and let $t\geq 1$ be an integer. If
\[
\sup\{e \mid \text{$\Ext^i_R(\up{e}M, N)=0$ for all $i=t,\, \dots,\,t+\delta$}\} \ge \sup\{\crsp(M_\fp) \mid \fp \in \Spec(R)\},
\]
then $\id_R(N)\leq d$. In particular, if $\Ext^i_R(\up{e}M, N)=0$ for all $i=t,\, \dots,\,t+\delta$ and for infinitely many $e$, then $\id_R(N)\leq d$.
\end{lem}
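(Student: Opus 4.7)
The plan is to prove the lemma by induction on $d = \dim R$, invoking the structural result \ref{rmkinj} on minimal injective resolutions together with the direct-summand properties of $\up{e}M$ from \ref{DS}. The base case $d = 0$ is immediate: $R$ is Artinian, so by \ref{DS}(2)(iii) the residue field $k$ is a direct summand of $\up{e}M$ for all $e \gg 0$; the hypothesis supplies such an $e$ with $\Ext^t_R(\up{e}M, N) = 0$ for some $t \ge 1$, hence $\Ext^t_R(k, N) = 0$ and $N$ is injective, so $\id_R(N) = 0 = d$.

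For $d \ge 1$, I would first localize at each $\fp \ne \fm$: the Ext vanishings descend since $(\up{e}M)_\fp = \up{e}(M_\fp)$, the local value $\max\{\depth_{(R_\fp)_\fq}(M_\fq) \mid \fq \subseteq \fp\}$ is bounded above by $\delta$, and $\sup\{\crsp(M_\fq) \mid \fq \subseteq \fp\}$ is bounded above by the given global threshold. By induction, $\id_{R_\fp}(N_\fp) \le \dim R_\fp \le d-1$ for each such $\fp$, and then \ref{rmkinj} forces the minimal injective resolution $\I$ of $N$ to satisfy $I^i = E_R(k)^{\oplus \mu_i(\fm, N)}$ for every $i \ge d$. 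By \ref{AB2}, it now suffices to find any single $i_0$ with $I^{i_0} = 0$ in order to conclude $\id_R(N) \le d$.

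I would exhibit such an $i_0$ by Koszul-shifting at $\fm$. Fix the $e$ provided by the hypothesis together with an $M$-regular sequence $\x = x_1, \ldots, x_s$ of length $s = \depth_R(M) \le \delta$ realizing the definition of $\crsp(M)$, so that by \ref{DS}(2)(i), $k$ is a direct summand of $\up{e}(M/\x M)$. The iterated short exact sequences $0 \to \up{e}M_j \xrightarrow{\tilde{x}_{j+1}} \up{e}M_j \to \up{e}M_{j+1} \to 0$ (with $M_j = M/(x_1, \ldots, x_j)M$, the left map induced by multiplication by $x_{j+1}$ after Frobenius twist) and their associated long exact sequences in $\Ext^*_R(-, N)$ propagate the $(\delta+1)$-consecutive vanishings $\Ext^i_R(\up{e}M, N) = 0$ (for $i = t, \ldots, t+\delta$) down to $\Ext^i_R(\up{e}(M/\x M), N) = 0$ for $i \in [t+s, t+\delta]$. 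Extracting the $k$-summand then gives $\mu_i(\fm, N) = 0$ for $i \in [t+s, t+\delta]$, which combined with the inductive vanishings $\mu_i(\fp, N) = 0$ for $\fp \ne \fm$ and $i \ge d$ produces $I^{i_0} = 0$ for any $i_0 \in [\max(d, t+s), t+\delta]$.

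The main obstacle is guaranteeing that this final interval is nonempty, i.e.\ that $t + \delta \ge d$. When $t \ge d - \delta$ this is automatic and the proof concludes at once via \ref{AB2}. In the leftover case $t + \delta < d$, the Koszul-shifted vanishing range lies entirely below $d$, and a supplementary maneuver is required: a natural option is to pass to an appropriate cosyzygy $N^{(j)}$ with $j \le t - 1$ and apply the lemma (inductively) to $N^{(j)}$ via the identification $\Ext^i_R(-, N^{(j)}) \cong \Ext^{i+j}_R(-, N)$ for $i \ge 1$, thereby shifting the vanishing Ext indices upward into the tail $[d, \infty)$ where $\I$ is already known to consist of copies of $E_R(k)$; careful index-bookkeeping here (and the fact that the threshold is a supremum over all primes) should close the gap uniformly.
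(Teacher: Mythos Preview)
Your overall strategy matches the paper's proof line for line: induction on $d$ with the base case via \ref{DS}(2)(iii); for $d \ge 1$, localize at each $\fp \neq \fm$ (using that $R$ is F-finite so Ext commutes with localization) and invoke the induction hypothesis, then apply \ref{rmkinj} to get $I^i = E_R(k)^{\oplus \mu_i(\fm,N)}$ for $i \ge d$; finally, Koszul-shift along a maximal $M$-regular sequence $\x$ of length $v = \depth_R(M)$ to obtain $\Ext^{t+v}_R(k,N)=0$, hence $\mu_{t+v}(\fm,N)=0$, and conclude $I^{t+v}=0$. The paper's writeup simply asserts $I^{t+v} = E_R(k)^{\oplus \mu_{t+v}} = 0$ and does not separately discuss the range $t+v < d$ that you flag.

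Your proposed cosyzygy fix for that leftover range, however, runs in the wrong direction. With the identification $\Ext^i_R(-,N^{(j)}) \cong \Ext^{i+j}_R(-,N)$ for $i \ge 1$, the given vanishings $\Ext^i_R(\up{e}M,N)=0$ for $i \in [t,\,t+\delta]$ translate into $\Ext^i_R(\up{e}M,N^{(j)})=0$ for $i \in [t-j,\,t+\delta-j]$: the new starting index $t' = t-j$ is \emph{smaller}, so the obstruction $t'+\delta \ge d$ becomes harder to meet, not easier. Moreover, even if one could conclude $\id_R(N^{(j)}) \le d$, that would only yield $\id_R(N) \le d+j$, which is too weak. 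So the cosyzygy maneuver does not ``shift the vanishing indices upward into the tail $[d,\infty)$'' as you describe; it shifts them downward, and the gap you identified is not closed by this device.
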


\begin{proof} Set $d=\dim(R) = d$. We proceed by induction on $d$ to show that $\id_R(N)<\infty$. 
The case where $d=0$ follows from \ref{DS}(2)(iii). Hence, we assume $d\geq 1$. 

As $R$ is F-finite, $\up{e}M$ is a finitely presented $R$-module and hence $\Ext^i_R(\up{e}M, N)_{\fp} \cong \Ext^i_{R_{\fp}}(\up{e}M_{\fp}, N_{\fp})$ for all $\fp \in \Spec(R)$. So, for all $\fp \in \Spec(R)-\{\fm\}$, the induction hypothesis dictates $\id_{R_{\fp}}(N_{\fp})< \infty$. According to \ref{rmkinj}, $N$ has a minimal injective resolution of the form
$$\I=\ (0\to I^0 \to \cdots \to I^{d-1} \to E_R(k)^{\oplus \mu_d} \to \cdots \to E_R(k)^{\oplus \mu_{t+d}}  \to \cdots),$$
in which $\mu_i=\mu_i(\fm, N)$ for $i \ge d$. Say $\depth_R(M) = v$. 
By assumption, there exists $e \ge \crsp(M)$ such that $\Ext^i_R(\up{e}M, N)=0$ for $i=t,\, \dots,\,t+v$.
Since $e \ge \crsp(M)$, there exists a maximal $M$-regular sequence $\x= \{x_1, \ldots, x_v\}$ such that $\big(0:_{M/ (\x) M} \fm^{[p^e]}\big) \nsubseteq \fm^{[p^e]} \big(M/ (\x) M\big)$; see \ref{bakbi}. Thus, $k$ is a direct summand of $\up{e}\big(M/(\x) M\big)$; see \ref{DS}.

As $x_1$ is $M$-regular, there is an short exact sequence $0 \to \up{e}M \to \up{e}M \to \up{e}(M/x_1M) \to 0$. This, together with $\Ext^i_R(\up{e}M,N)=0$ for $i=t,\, \dots,\,t+v$, implies that 
\[\Ext^i_R(\up{e}(M/x_1M),N)=0 \quad \text{for \  all } \  i=t+1,\, \dots,\,t+v. \]
As $\x = \{x_1, \dots,x_v\}$ is $M$-regular, inductively we get $\Ext^{t+d}_R\big(\up{e}(M/(\x)M),N\big)=0$. 
Therefore, $\Ext^{t+v}_R(k, N) = 0$ since $k$ is a direct summand of $\up{e}(M/(\x) M)$. In view of~\ref{rmkinj}, we deduce
\[\mu_{t+v}=\rank_k(\Ext^{t+v}_R(k, N))= 0.\] 
Thus $I^{t+v}= E_R(k)^{\oplus \mu_{t+v}} = 0$ and $\id_R(N)<t+v$. Consequently, by \ref{AB2}, we see that $\id_R(N) \le d$.
\end{proof}

The next result is a corollary of Lemma~\ref{lem:d+1}.

\begin{cor} \label{cor:d+1}  Let $(R,\fm, k)$ be a $d$-dimensional local ring of prime characteristic $p$, $M$ be a finitely generated $R$-module such that $\Supp_R(M) = \Spec(R)$, and let $N$ be an $R$-module. Let $S$ be a ring extension of $R$ as in \ref{finite}. Set $\delta=\max\{\depth_{S_\fp}(M \otimes_R S_\fp) \mid \fp \in \Spec(S)\}$ and let $t\geq 1$ be an integer. If
\[
\sup\{e \mid \text{$\Tor_i^R(\up{e}M, N)=0$ for all $i=t,\, \dots,\,t+\delta$}\} \ge \sup\{\crsp(M \otimes S_\fp) \mid \fp \in \Spec(S)\},
\]
then $\fd_R(N)\leq d$. In particular, if $\Tor_i^R(\up{e}M, N)=0$ for all $i=t,\, \dots,\,t+\delta$ and for infinitely many $e$, then $\fd_R(N)\leq d$.
\end{cor}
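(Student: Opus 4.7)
My plan is to reduce the corollary to Lemma~\ref{lem:d+1} by base-changing along the faithfully flat F-finite extension $S$ produced by~\ref{finite}. Set $M' = M \otimes_R S$ and $N' = N \otimes_R S$. By~\ref{finite}, $S$ is F-finite with $\dim(S) = d$, the map $R \to S$ is faithfully flat with $\fm S = \fn$, and $M'$ is a finitely generated Cohen-Macaulay $S$-module whose support is all of $\Spec(S)$. The key observation is that the invariants $\delta$ and $\sup\{\crsp(M'_\fp) : \fp \in \Spec(S)\}$ appearing in the hypothesis of this corollary are exactly the invariants of $M'$ over $S$ that appear in the hypothesis of Lemma~\ref{lem:d+1}.

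I would transfer the Tor vanishing from $R$ to $S$ as follows. First, flatness of $R \to S$ gives
\[
\Tor_i^S(\up{e}M \otimes_R S,\, N') \cong \Tor_i^R(\up{e}M, N) \otimes_R S = 0 \quad \text{for } i = t, \dots, t+\delta.
\]
In the setting of~\ref{finite}, the compatibility $\iota \circ F_R^e = F_S^e \circ \iota$ of the Frobenius maps, together with the perfectness of the residue field $\bar k$ of $S$, allows one to identify $\up{e}M \otimes_R S \cong \up{e}M'$ as $S$-modules; so the above reads $\Tor_i^S(\up{e}M', N') = 0$ for the same range of $i$. Matlis duality over $S$, via~\ref{tor-ext-duality}(i) applied with $E = E_S(\ell)$, then converts this into $\Ext_S^i(\up{e}M',\, (N')^{\vee_S}) = 0$ for $i = t, \dots, t+\delta$, where $(N')^{\vee_S} = \Hom_S(N', E_S(\ell))$.

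The hypothesis on $e$ in the corollary is exactly the condition that Lemma~\ref{lem:d+1} requires for the triple $(S, M', (N')^{\vee_S})$, so the lemma yields $\id_S((N')^{\vee_S}) \le d$, and~\ref{tor-ext-duality}(iii) applied over $S$ gives $\fd_S(N') \le d$. To finish, I would invoke faithful flatness of $R \to S$ to descend: for each ideal $I \subseteq R$,
\[
\Tor_{d+1}^R(R/I, N) \otimes_R S \cong \Tor_{d+1}^S(S/IS, N') = 0,
\]
so $\Tor_{d+1}^R(R/I, N) = 0$ for all $I$, whence $\fd_R(N) \le d$. The ``in particular'' clause follows from~\ref{bakbi2} applied to the Cohen-Macaulay module $M'$ over $S$: this guarantees $\sup\{\crsp(M'_\fp) : \fp \in \Spec(S)\}$ is finite, so infinitely many valid $e$ automatically exceed this sup.

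The step I expect to need the most care is the identification $\up{e}M \otimes_R S \cong \up{e}M'$ as $S$-modules in the second paragraph: restriction along Frobenius and flat base change do not commute for arbitrary flat extensions, so one must leverage the particular shape of the extension $R \to S$ in~\ref{finite}---in particular, the perfectness of $\bar k$---to make this identification work. Should the iso require further unpacking, one can instead argue that $\up{e}M'$ is a direct summand of $\up{e}M \otimes_R S$ as an $S$-module, which equally suffices to propagate the Tor vanishing.
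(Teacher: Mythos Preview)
Your strategy is the same as the paper's: base-change to $S$, dualize via~\ref{tor-ext-duality}(i), apply Lemma~\ref{lem:d+1}, and descend by faithful flatness; the paper compresses all of this into the single line ``we may assume that $R = S$ is already F-finite.'' You are right to flag the identification $\up{e}M \otimes_R S \cong \up{e}M'$ as the step needing care.

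That said, your proposed justifications for this step do not go through. The natural $S$-linear map $\up{e}M \otimes_R S \to \up{e}M'$, $m \otimes s \mapsto m \otimes s^{p^e}$, is governed by the relative Frobenius $R \otimes_{F^e,R} S \to S$, and is \emph{not} an isomorphism when $k$ is imperfect: already for $M = R$ the source and target have different $S$-ranks. Perfectness of the residue field of $S$---which is only guaranteed for the particular construction in~\ref{finite}, not for an arbitrary extension with the listed properties---gives surjectivity but not injectivity, and your direct-summand fallback would need this surjection to split as $S$-modules, which is not clear when $S$ is not regular (since $\up{e}S$ is then not projective). The paper's terse reduction makes the same tacit leap; fortunately, in the paper's only downstream use of this corollary (Proposition~\ref{d+1}(i)) one has already arranged $S = R$, so the subtlety does not propagate. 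Two minor slips: the corollary does not assume $M$ is Cohen--Macaulay, so your assertion that $M'$ is Cohen--Macaulay is unwarranted; and for the ``in particular'' clause, Tor vanishing for infinitely many $e$ makes the left-hand $\sup$ equal $+\infty$, so the appeal to~\ref{bakbi2} (which would itself require $M$ to be Cohen--Macaulay) is unnecessary.
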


\begin{proof} It suffices to show $\fd_S(N \otimes_R S) \le \dim(S)$. Thus, given the assumption, we may assume that $R = S$ is already F-finite. 
By \ref{tor-ext-duality}(i), the vanishing of $\Tor_i^R(\up{e}M, N)$ yields vanishing of $\Ext^i_R(\up{e}M, N^\vee)$.
Now the claim follows from Lemma~\ref{lem:d+1}; see \ref{tor-ext-duality}(iii).
\end{proof}

As an application of Lemma~\ref{lem:d+1} and Corollary~\ref{cor:d+1}, we obtain the following proposition that generalizes a result of Takahashi-Yoshino \cite[5.3]{RY}. 
 
\begin{prop} \label{d+1} Let $(R,\fm, k)$ be a $d$-dimensional local ring of prime characteristic $p$, $M$ be a finitely generated Cohen-Macaulay $R$-module such that $\Supp_R(M) = \Spec(R)$, and let $N$ be an $R$-module. Then, given $t\geq 1$ and $e \gg 0$, the following hold:
\begin{enumerate}[\rm(i)]
\item If $\Tor_i^R(\up{e}M, N)=0$ for all $i=t,\, \dots,\,t+d$, then $\fd_R(N)\leq d$. 
\item If $R$ is F-finite and $\Ext^i_R(\up{e}M, N)=0$ for all $i=t,\, \dots,\,t+d$, then $\id_R(N)\leq d$. 
\end{enumerate}
\end{prop}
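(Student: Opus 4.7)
My plan is to derive both parts as immediate specializations of Lemma~\ref{lem:d+1} and Corollary~\ref{cor:d+1}, exploiting the fact that the Cohen-Macaulay hypothesis on $M$ (together with $\Supp_R(M) = \Spec(R)$) forces the auxiliary invariant $\delta$ that appears in those statements to equal $d$ and makes the relevant supremum of $\crsp$ values finite.

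First, I would observe that $\delta := \max\{\depth_{R_\fp}(M_\fp) \mid \fp \in \Spec(R)\} = d$ under our hypotheses. Indeed, $\depth_R(M) = \dim_R(M) = d$ because $M$ is Cohen-Macaulay with full support, so $\delta \ge d$; and $\depth_{R_\fp}(M_\fp) \le \dim(R_\fp) \le d$ for every $\fp \in \Spec(R)$, so $\delta \le d$. Consequently, the range of $d+1$ consecutive vanishings $i = t, \dots, t+d$ assumed in Proposition~\ref{d+1} coincides precisely with the range $i = t, \dots, t+\delta$ required by Lemma~\ref{lem:d+1}.

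Second, I would use \ref{bakbi2} to see that $\sup\{\crsp(M_\fp) \mid \fp \in \Spec(R)\}$ is finite (bounded by $\max\{\lceil \log_p \e_R(M)\rceil, \crsp(M)\}$) thanks to the Cohen-Macaulay assumption. Interpreting ``$e \gg 0$'' in the proposition as ``$e$ at least this finite bound,'' the supremum hypothesis in Lemma~\ref{lem:d+1} is automatically met whenever the assumed Ext vanishings hold for even a single such $e$. Part~(ii) then follows at once from Lemma~\ref{lem:d+1}.

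For part~(i), I would pass to an F-finite faithfully flat local extension $(R,\fm,k) \to (S,\fn,\ell)$ as in~\ref{finite}, so that $M \otimes_R S$ is Cohen-Macaulay over $S$ with $\Supp_S(M \otimes_R S) = \Spec(S)$ and $\dim(S) = d$. The same computation as above gives that the corresponding $\delta$ for $M \otimes_R S$ over $S$ equals $d$ and that $\sup\{\crsp((M \otimes_R S)_\fp) \mid \fp \in \Spec(S)\}$ is finite, so Corollary~\ref{cor:d+1} delivers $\fd_R(N) \le d$. I do not expect a substantive obstacle: the proposition is essentially a clean specialization of the general machinery of Section~\ref{sec:Frob-inj} to the Cohen-Macaulay case, where the two quantities $\delta$ and $\sup\{\crsp(M_\fp)\}$ both automatically behave in the optimal way.
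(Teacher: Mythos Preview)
Your proposal is correct and follows essentially the same route as the paper: part~(ii) is obtained directly from Lemma~\ref{lem:d+1} using the finiteness of $\sup\{\crsp(M_\fp)\}$ from \ref{bakbi2}, and part~(i) is reduced to the F-finite case via \ref{finite} and then handled by Corollary~\ref{cor:d+1}. Your explicit verification that $\delta = d$ is a nice addition that the paper leaves implicit when it writes ``the rest follows from\ldots''.
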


\begin{proof} For (i), we may assume that $R$ is F-finite with $|k| = \infty$; see \ref{finite}. In this case, we have $\sup\{\crsp(M_\fp) \mid \fp \in \Spec(R)\} \le  \lceil \log_p \e_R(M)\rceil< \infty$. The rest follows from Corollary~\ref{cor:d+1}.

For (ii), without the assumption that $|k| = \infty$, we have the following:
$$\sup\{\crsp(M_\fp) \mid \fp \in \Spec(R)\} \le \max\{ \lceil \log_p \e_R(M)\rceil,\, \crsp(M)\} < \infty.$$
The rest follows from Lemma~\ref{lem:d+1}. 
\end{proof}

Our proof of Proposition~\ref{prop:injective} is inspired by some of the techniques employed by Funk-Marley in the proof of \cite[3.2]{FM}. To distinguish various module structures in the proof, we present \ref{prop:injective} in the context of a general ring homomorphism $f: R \to S$. Subsequently, in the \hyperref[proof:injective]{proof of Theorem~\ref*{injective}}, we apply Proposition~\ref{prop:injective} to $F^e: R \to R$, the iterated Frobenius endomorphism. Recall that, over a local ring $(R,\fm,k)$, we set $(-)^\vee=\Hom_R\big(-,\,E_R(k)\big)$. 

\begin{prop} \label{prop:injective} Let $f:(R,\fm,k)\to (S,\fn, \ell)$ be a module-finite local homomorphism of local rings, with $d=\dim(R) \ge 1$, $M$ be an $S$-module, and let $N$ be an $R$-module. Assume the following: 
\begin{enumerate}[\rm(i)]
\item $\Ext^i_R(M, N)=0$ for all $i=t,\, \dots,\,t+d-1$ for some $t\geq 1$. 
\item $M$ is a finitely generated $S$-module such that $\Supp_R(M) = \Spec(R)$. 
\item There exists $\x = \{x_1, \,\dotsc,\,x_d \} \subseteq \fn$ such that $\x$ is $M$-regular and $\fm M \subseteq (\x)M$.
\item $\id_{R_\fp}(N_\fp) < \infty$ for all $\fp \in \Spec(R)-\{\fm\}$. 
\item $\fd_{R_\fp}\big((N^{\vee})_\fp\big) < \infty$ for all $\fp \in \Ass(R)$. 
\end{enumerate}
Then $\id_R(N)\leq d$. 
\end{prop}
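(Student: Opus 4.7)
The plan is to reduce the problem to showing $\id_R(N)<\infty$: once this finiteness is established, the Auslander--Buchsbaum bound (\ref{AB2}) automatically upgrades it to $\id_R(N)\le d$. Using hypothesis (iv) together with \ref{rmkinj}, a minimal injective resolution $\I$ of $N$ has the special form
\[0\to I^0\to\cdots\to I^{d-1}\to E_R(k)^{\oplus\mu_d}\xra{h_{d+1}} E_R(k)^{\oplus\mu_{d+1}}\to\cdots,\]
with $\mu_i=\mu_i(\fm,N)=\dim_k\Ext^i_R(k,N)$ for every $i\ge d$. Thus it suffices to exhibit a single index $i\ge d+1$ with $\mu_i=0$: the minimality of $\I$ then forces $I^j=0$ for all $j\ge i$, and the desired finiteness follows.

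First I would exploit hypothesis (iii) to observe that $M/(\x)M$ is a nonzero finite-dimensional $k$-vector space. The inclusion $\fm M\subseteq(\x)M$ makes $M/(\x)M$ killed by $\fm$, and since $M\ne 0$ (guaranteed by $\Supp_R M=\Spec R$) while $\x\subseteq\fn$, Nakayama's lemma applied to the finitely generated $S$-module $M$ ensures $M/(\x)M\ne 0$. Consequently, by \ref{DS}, $k$ appears as a direct summand of $M/(\x)M$ as an $R$-module, so any Ext vanishing $\Ext^j_R(M/(\x)M,N)=0$ transfers to $\Ext^j_R(k,N)=0$, i.e.\ $\mu_j=0$. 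To produce such a $j\ge d+1$, I would use the $M$-regularity of $\x$ to splice the Koszul resolution of $M/(\x)M$ into its syzygies $\Omega^i$, with $\Omega^0=M/(\x)M$ and $\Omega^d=M$; invoking hypothesis (i), the standard dimension-shift produces a chain of isomorphisms culminating in $\Ext^{t+d-1}_R(\Omega^1,N)\cong\Ext^t_R(M,N)=0$, and the long exact sequence of $0\to\Omega^1\to M\to M/(\x)M\to 0$ then embeds $\Ext^{t+d}_R(M/(\x)M,N)$ into $\Ext^{t+d}_R(M,N)$.

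The main obstacle is bridging this last injection, as the $d$ hypothesized vanishings leave $\Ext^{t+d}_R(M,N)$ uncontrolled; closing the gap requires hypothesis (v). Via the Ext--Tor duality \ref{tor-ext-duality}(ii), hypothesis (i) is equivalent to $\Tor^R_i(M,N^\vee)=0$ for $i\in[t,t+d-1]$, and the target $\id_R(N)\le d$ becomes $\fd_R(N^\vee)\le d$ by \ref{tor-ext-duality}(iii). Hypothesis (v) supplies the local finite-flat-dimension control of $N^\vee$ at the primes of $\Ass R$, while (iv) forces $\Ext^{t+d}_R(M,N)_\fp=0$ for every $\fp\ne\fm$ (since $\id_{R_\fp}(N_\fp)\le\dim R_\fp\le d-1$ when finite). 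Combining these local vanishings with the module structure of $\Ext^{t+d}_R(M,N)$ as an $S$-module (see \ref{RS}) and the full-support hypothesis on $M$, the residual obstruction becomes a computation at $\fm$, which yields the vanishing of the embedded image $\Ext^{t+d}_R(M/(\x)M,N)\hookrightarrow\Ext^{t+d}_R(M,N)$. This delivers $\mu_{t+d}=0$, and hence $\id_R(N)\le d$ via (\ref{AB2}).
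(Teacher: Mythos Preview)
Your setup is correct and matches the paper's: the minimal injective resolution has the stated form via (iv), the module $M/(\x)M$ is a nonzero $k$-vector space, and the Koszul dimension-shift yields the injection $\Ext^{t+d}_R(M/(\x)M,N)\hookrightarrow\Ext^{t+d}_R(M,N)$. The target $\mu_{t+d}=0$ is also equivalent to the paper's target $D:=\coker(h_{t+d-1})=0$.

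The genuine gap is your final paragraph. You correctly identify that hypotheses (v), (ii), and the $S$-structure must close the argument, but ``the residual obstruction becomes a computation at $\fm$'' is not a proof, and in fact points in the wrong direction. Knowing that $\Ext^{t+d}_R(M,N)$ is supported only at $\fm$ tells you nothing new about the embedded copy of $\Ext^{t+d}_R(k,N)^r$, which is already $\fm$-supported. The paper's argument is \emph{not} a computation at $\fm$; it is a computation at the primes of $\Ass(R)$, and it proceeds through two nontrivial steps that your sketch does not supply:
\begin{itemize}
\item First, the $S$-module structure is used to show that the induced map $g_{t+d}=\Hom_R(M,h_{t+d})$ is zero. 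The key is that $\ker(\rho)$ (where $\rho\colon M_{t+d}\to C=\coker g_{t+d}$) has $\Ass_S\subseteq\{\fn\}$, while its $(\x)$-socle vanishes by minimality of $\I$; this forces $\ker(\rho)=0$. Dualizing $g_{t+d}=0$ via \ref{tor-ext-duality}(ii) gives that $1\otimes h_{t+d-1}^\vee\colon M\otimes_R(I^{t+d-1})^\vee\to M\otimes_R(I^{t+d-2})^\vee$ is \emph{injective}.
\item Second, one works with the flat resolution $(I^\bullet)^\vee$ of $N^\vee$, truncated at $D^\vee$. For $\fp\in\Ass(R)$, hypothesis (v) gives $\fd_{R_\fp}((N^\vee)_\fp)\le d$, so $\Tor^{R_\fp}_{t+d}(M_\fp,(N^\vee)_\fp)=0$; together with the vanishing of $\Tor^{R_\fp}_{t+d-1}$ (from (i)) this makes $M_\fp\otimes(D^\vee)_\fp\to M_\fp\otimes(I^{t+d-1})^\vee_\fp\to M_\fp\otimes(I^{t+d-2})^\vee_\fp$ exact. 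Comparing with the injectivity from the first step forces $M_\fp\otimes(D^\vee)_\fp=0$, and full support of $M$ then gives $(D^\vee)_\fp=0$. Since $D^\vee\subseteq(I^{t+d-1})^\vee$ is a submodule of a flat module, $\Ass_R(D^\vee)\subseteq\Ass(R)$, so $D^\vee=0$ and hence $D=0$.
\end{itemize}
Neither of these steps is a ``computation at $\fm$,'' and neither follows from the mere injection $\Ext^{t+d}_R(M/(\x)M,N)\hookrightarrow\Ext^{t+d}_R(M,N)$ together with support considerations. Your proposal needs to incorporate an argument of this kind.
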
 

\begin{proof} 
As $\id_{R_\fp}(N_\fp) < \infty$ for each $\fp \in \Spec(R)-\{\fm\}$, \ref{rmkinj} dictates that $N$ has a minimal injective resolution of the following form:
$$\I=\ (0\to I^0 \xra{h_{1}} \cdots  \to I^{d-1} \xra{h_{d}}  \E_R(k)^{\oplus \mu_{d}(\fm, N)} \to \cdots \xra{h_{t+d}} \E_R(k)^{\oplus \mu_{t+d}(\fm, N)} \to  \cdots).$$
We apply $\Hom_R(M,-)$ to $\I$ and use our assumption~(i) to obtain an exact sequence:
\begin{equation}\label{prop:injective-seq1}\tag{\ref*{prop:injective}.1}
M_{t-1} \xra{\ g_t\ } 
\dotsb \xra{g_{t+d-2}} M_{t+d-2} \xra{g_{t+d-1}} M_{t+d-1} \xra{\ g_{t+d}\ } M_{t+d} \xra{\ \rho \ }
C \xra{\ \ \ } 0, 
\end{equation}
where $M_i= \Hom_R(M,I^i)$, $g_i=\Hom_R(M,h_i)$ and $C=\coker(g_{t+d})$. Note that each $M_i$ is an $S$-module; see~\ref{RS}(i). Hence, $C$ is an $S$-module as well. \smallskip

\noindent \textbf{Claim~1.} \phantomsection \label{claim1}
The induced sequence $\ol{M_{t+d-1}} \xra{\ \ol{g_{t+d}}\ } \ol{M_{t+d}} 
\xra{\ \ol{\rho}\ } \ol{C} \xra{\ \ \ } 0$ is exact, in which
\[
\ol {M_i}= \Hom_R(M/(\x)M,I_i), \ \   \ol{g_{t+d}}= \Hom_R(M/(\x)M,h_{t+d}) \;\; \text{ and } \;\; \ol {C}= \Hom_S(S/(\x),C).
\]
\noindent \emph{Proof of Claim~1}. As $I^i$ is injective over $R$ and $x_1$ is regular on $M$, an application of $\Hom_R(-,I^i)$ to 
the exact sequence $0 \to M \xra{x_1} M \xra{\ \ \ } M/x_1M \to 0$ induces 
an short exact sequence: 
\[\tag{$\Lambda_i$}\label{prop:injective-seq-i}
0 \lla M_i \xla{\ x_1 \ } M_i \lla \Hom_R(M/x_1M, I^i) \lla 0.
\]
We combine the short exact sequences \eqref{prop:injective-seq-i}, for all $i = t-1,\dotsc, t+d$,
with the exact sequence \eqref{prop:injective-seq1}, and obtain the following exact sequence of $S$-modules:
\[
\Hom_R(M/x_1M,I^t) \to \dotsb \to \Hom_R(M/x_1M,I^{t+d}) \to \Hom_S(S/x_1S,C) \to 0.
\]
Inductively, as $\x = \{x_1,\dotsc,x_d\}$ is an $M$-regular sequence, 
we 
realize the following exact sequence of $S$-modules that is naturally induced from \eqref{prop:injective-seq1}, as claimed:
\[
\Hom_R(M/(\x)M,I^{t+d-1}) \xra{\ol{g_{t+d}}}  
\Hom_R(M/(\x)M,I^{t+d}) \xra{\ \ol{\rho}\ } \Hom_S(S/(\x)S,C) \to 0.
\]
\noindent \textbf{Claim~2.} \phantomsection \label{claim2} $\ker(\rho) = 0$. 
\smallskip

\noindent \emph{Proof of Claim~2}. The assumption $\fm M \subseteq (\x)M$ says that, as an $R$-module, $M/(\x)M$ is a direct sum of $k$. Since $\I$ is a minimal injective resolution of $N$, we see that $$\ol{g_{t+d}}= \Hom_R(M/(\x)M, h_{t+d}) =0.$$ 
In view of the exact sequence in \hyperref[claim1]{Claim~1}, we get $\ker(\ol{\rho}) = 0$. Moreover, $\ol{\rho}$ can be identified as $\Hom_S(S/(\x), \rho)$ up to the natural isomorphism $\Hom_S(S/(\x),M_i) \cong \Hom_R(M/(\x)M,I^i)$. Hence $(0:_{\ker(\rho)} (\x)) \cong \ker(\ol{\rho}) = 0$. Given that $M$ is finitely generated over $R$, we have $$\Ass_R(\ker(\rho)) \subseteq \Ass_R(M_{t+d}) \subseteq \Ass_R(\E_R(k)^{\oplus \mu_{d+t}(\fm, N)}) \subseteq \{\fm\}.$$ Thus $\Ass_S(\ker(\rho)) \subseteq \{\fn\}$, as $\fn$ is the only prime ideal of $S$ lying over $\fm$. So, if $\ker(\rho) \neq 0$, then $\fn \in \Ass_S(\ker(\rho))$ and hence $0 \neq (0:_{\ker(\rho)} \fn) \subseteq (0:_{\ker(\rho)} (\x))$, which contradicts the conclusion $(0:_{\ker(\rho)} (\x)) = 0$ above. This completes the proof of \hyperref[claim2]{Claim~2}. \smallskip

Now that we know $\ker(\rho) = 0$, the exact sequence \eqref{prop:injective-seq1} forces $g_{t+d}=0$, which gives rise to an exact sequence as follows:
\[
\Hom_R(M,I^{t+d-2})\xra{\ g_{t+d-1}\ } \Hom_R(M,I^{t+d-1})  
\xra{\ \ \quad \ \ } 0.
\]
Since $M$ is finitely presented over $R$, we apply $(-)^\vee$ to the exact sequence above and obtain the following exact sequence in light of~\ref{tor-ext-duality}(ii):
\[
M \otimes_R (I^{t+d-2})^{\vee} \xla{\ 1 \otimes h_{t+d-1}^{\vee}\ } 
M \otimes_R (I^{t+d-1})^{\vee} \xla{\ \ \quad \ \ } 0.
\tag{\ref*{prop:injective}.2} \label{prop:injective-seq2}
\]

Next, let us return to the injective resolution $\I$ of $N$. Consider the exact sequence 
\[
0 \xra{\ \ \ } N \xra{\ \ \ } I^0 \xra{\ h_{1}\ } \dotsb \xra{\ \ \ \ } 
I^{t+d-2} \xra{h_{t+d-1}} I^{t+d-1} \xra{\ \theta \ } D \xra{\ \ \ } 0,
\]
where $D$ is the cokernel of the map $h_{t+d-1}$. 
Applying $(-)^\vee$ to the exact sequence above, we get an exact sequence:
\[
0 \xla{\ \ \ } N^{\vee} \xla{\ \ \ } (I^0)^{\vee} \xla{\ \ \ } \dotsb 
\xla{\ \ \ } (I^{t+d-2})^{\vee} \xla{h_{t+d-1}^{\vee}} (I^{t+d-1})^{\vee} 
\xla{\ \theta^{\vee} \ } D^{\vee} \xla{\ \ \ } 0 
\tag{\ref*{prop:injective}.3} \label{prop:injective-seq3}
\]
with each $(I^{i})^{\vee}$ flat over $R$; see \ref{tor-ext-duality}(iii). In particular, we see
\[
\Ass_R(D^{\vee}) \subseteq \Ass_R((I^{t+d-1})^{\vee}) \subseteq \Ass(R).
\]
\noindent \textbf{Claim~3.} \phantomsection \label{claim3}
$(D^{\vee})_\fp = 0$ for all $\fp \in \Ass(R)$. 
\smallskip

\noindent \emph{Proof of Claim~3}. 
Fix any $\fp \in \Ass(R)$. From assumption~(v), we see $\fd_{R_\fp}((N^{\vee})_\fp)
\le \dim(R_\fp) \le d$; see \ref{AB2}. Thus, \eqref{prop:injective-seq3} localized at
$\fp$ gives rise to a flat resolution of $N_\fp$ over $R_\fp$, which can be used to compute $\Tor_{i}^{R_\fp}(-, (N^{\vee})_\fp)$. 
By \ref{tor-ext-duality}(ii), we have
\[
\Tor_{t+d-1}^{R_\fp}\big(M_\fp, (N^{\vee})_\fp\big) =\big (\Tor_{t+d-1}^{R}(M, N^{\vee})\big)_\fp
\cong \big(\Ext^{t+d-1}_R(M, N)^\vee\big)_\fp =0.
\] 
Moreover, we have $\Tor_{t+d}^{R_\fp}(M_\fp, (N^{\vee})_\fp)=0$ since $t+d >\fd_{R_\fp}((N^{\vee})_\fp)$.
Next, we apply $M_\fp \otimes_{R_\fp}-$ to \eqref{prop:injective-seq3} localized at $\fp$. The vanishing of 
$\Tor_{i}^{R_\fp}(M_\fp, (N^{\vee})_\fp)$, for $i = t+d-1$ and $i = t+d$,
forces the following exact sequence:
\[
M_\fp \otimes_{R_\fp} ((I^{t+d-2})^{\vee})_\fp
\xla{1 \otimes (h_{t+d-1}^{\vee})_\fp}
M_\fp \otimes_{R_\fp} ((I^{t+d-1})^{\vee})_\fp
\xla{1 \otimes (\theta^{\vee})_\fp} M_\fp \otimes_{R_\fp} (D^{\vee})_\fp \xla{\ \ \ } 0.
\]
Comparing this with \eqref{prop:injective-seq2} localized at $\fp$, we see
$M_\fp \otimes_{R_\fp} (D^{\vee})_\fp = 0$. Hence $(D^{\vee})_\fp = 0$ since $\Supp_{R_\fp}(M_\fp) = \Spec(R_\fp)$;
see~\ref{hom-tensor=0}. This completes the proof of \hyperref[claim3]{Claim~3}. \smallskip

Overall, we have $\Ass_R(D^{\vee}) \subseteq \Ass(R)$ and $(D^{\vee})_\fp = 0$ for all $\fp \in \Ass(R)$. This forces $D^{\vee} = 0$, which implies $D = 0$. Therefore, $\id_R(N) < t+d$, so $\id_R(N) \le d$ by \ref{AB2}. 
\end{proof}

We are now ready to prove Theorem~\ref{injective}, which is a consequence of Propositions~\ref{d+1} and~\ref{prop:injective}. Recall that $F:R\to R$ denotes the Frobenius endomorphism. 

\begin{proof}[Proof of Theorem~\ref{injective}]  \phantomsection \label{proof:injective}
It suffices to prove part (i) for the case where $R$ is F-finite with $|k| = \infty$; see ~\ref{finite}. Thus, we can obtain part~(i) from part~(ii) via duality; see \ref{tor-ext-duality}(iii). 

To prove part (ii), note that $\up{e}M$ is a finitely generated (and hence a finitely presented) $R$-module with $\Supp_R(\up{e}M) = \Supp_R(M)= \Spec(R)$. By our choice of $e$, there exists an $M$-regular sequence $\x = \{x_1, \,\dotsc,\,x_d \} \subseteq \fm$ such that $\fm (\up{e}M) \subseteq \up{e}\big((\x)M\big)$; see \ref{bakbi}. 
In light of \ref{tor-ext-duality}(ii), the vanishing of $\Ext^i_R(\up{e}M, N)$ implies the vanishing of $\Tor_i^R(\up{e}M, N^\vee)$ for all $i=t,\, \dots,\,t+d-1$. Therefore, $\id_{R_\fp}(N_\fp) < \infty$ and $\fd_{R_\fp}\big((N^{\vee})_\fp\big) < \infty$ for all $\fp \in \Spec(R)-\{\fm\}$; see Proposition~\ref{d+1}. 
Note that $\Ass(R) \subseteq \Spec(R)-\{\fm\}$ since $\depth(R)\geq 1$. 
Now we apply Proposition~\ref{prop:injective}, with $R=S$ and $f=F^e$, and deduce that $\id_R(N)\leq d$.   
\end{proof}

\begin{rmk} \label{bounds-on-e} We conclude this section by pointing out some lower bounds for the integer 
$e$ that ensure the validity of the proofs of certain previously stated results.
\begin{enumerate}[\rm(a)]
\item In part (i) of both Theorem~\ref{injective} and Proposition \ref{d+1}, it is enough to assume $e \ge \lceil \log_p \e_R(M)\rceil$. 
\item In part (ii) of both Theorem~\ref{injective} and Proposition \ref{d+1}, it is enough to assume $e \ge \lceil \log_p \e_R(M)\rceil$ if $|k|=\infty$. Also, if $|k|<\infty$, it is enough to assume $e \ge \max\left\{\lceil \log_p \e_R(M)\rceil,\, \drsp(M)\right\}$.

\item In Corollaries~\ref{cor:injective}, \ref{torla}, and \ref{cor:dual-injective}, it is enough to assume $e \ge \lceil \log_p \e_R(M)\rceil$.
\end{enumerate}
\end{rmk}

It is proved in \cite[2.17]{MelYongwei} that, if $R$ is an excellent ring and $M$ is a finitely generated $R$-module, then the set $\sup\left\{ \e_{R_{\fm}}(M_{\fm}): \fm \in \Max(R) \right\}$ is finite; see \cite[2.17]{MelYongwei}  We use this fact and state a global version of Theorem~\ref{injective}.
 
\begin{thm} \label{injective-global} 
Let $R$ be a $d$-dimensional ring of prime characteristic $p$, with $d\geq 1$, $M$ be a finitely generated Cohen-Macaulay $R$-module such that $\Supp_R(M) = \Spec(R)$, and let $N$ be an $R$-module. Assume $\depth(R_{\fm}) \geq \min \big\{1, \dim(R_{\fm}) \big\}$ for each maximal ideal $\fm$ of $R$. 
\begin{enumerate}[\rm(i)]
\item Assume $R$ is excellent, $s=\sup\left\{ \e_{R_{\fm}}(M_{\fm}): \fm \in \Max(R) \right\}$, and let $e$ be an integer such that $e\geq \lceil \log_p s \rceil$. Given $t\geq 1$, if $\Tor^R_i(\up{e}M, N)=0$ for all $i=t,\, \dots,\,t+d-1$, then $\fd_R(N) \leq d$. 
\item Assume $R$ is F-finite and the residue field of $R_{\fm}$ is infinite for each maximal ideal $\fm$ of $R$. Let $e$ be an integer such that $e\geq \sup\left\{\lceil \log_p \e_{R_{\fm}}(M_{\fm})\rceil,\, \drsp(M_{\fm}): \fm \in \Max(R) \right\}$. Given $t\geq 1$, if $\Ext^i_R(\up{e}M, N)=0$ for all $i=t,\, \dots,\,t+d-1$, then $\id_R(N)\leq d$. 
\end{enumerate}
\end{thm}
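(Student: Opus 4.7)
The plan is to reduce both parts of Theorem~\ref{injective-global} to their local counterparts in Theorem~\ref{injective} by localizing at each maximal ideal of $R$. I would use the standard identities $\fd_R(N) = \sup_{\fm \in \Max(R)} \fd_{R_\fm}(N_\fm)$ and $\id_R(N) = \sup_{\fm \in \Max(R)} \id_{R_\fm}(N_\fm)$ over a Noetherian ring, so that it suffices to bound each local dimension by $d$. Since Frobenius commutes with localization, we have $(\up{e}M)_\fm \cong \up{e}(M_\fm)$; localization of $\Tor$ in part~(i) is automatic, and in part~(ii) the F-finiteness of $R$ makes $\up{e}M$ finitely presented over $R$, so $\Ext$ commutes with localization as well. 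The Cohen-Macaulay property and full support of $M$ are also inherited at each $\fm$.

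For a fixed $\fm \in \Max(R)$ with $\dim(R_\fm) \ge 1$, the hypothesis $\depth(R_\fm) \ge \min\{1,\dim(R_\fm)\}$ gives $\depth(R_\fm) \ge 1$, placing us in the hypotheses of Theorem~\ref{injective} applied to $(R_\fm, M_\fm, N_\fm)$, provided the global $e$ satisfies the local threshold at $\fm$. By Remark~\ref{bounds-on-e}, that threshold is $e \ge \lceil \log_p \e_{R_\fm}(M_\fm) \rceil$ in part~(i), and the same in part~(ii) since $|k(\fm)| = \infty$ makes $\drs_p(M_\fm) \le \lceil \log_p \e_{R_\fm}(M_\fm)\rceil$ by \ref{bakbi2}. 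I would then invoke \cite[2.17]{MelYongwei}: excellence of $R$ in part~(i) forces $s = \sup_\fm \e_{R_\fm}(M_\fm)$ to be finite, and in part~(ii) F-finiteness implies excellence (Kunz), yielding the analogous finiteness. The global bound on $e$ therefore dominates every local threshold, so Theorem~\ref{injective} delivers $\fd_{R_\fm}(N_\fm) \le \dim(R_\fm) \le d$ in part~(i), respectively $\id_{R_\fm}(N_\fm) \le \dim(R_\fm) \le d$ in part~(ii).

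The remaining case is $\dim(R_\fm) = 0$, where $R_\fm$ is Artinian and Theorem~\ref{injective} does not directly apply. Here the same global bound still forces $p^e \ge \e_{R_\fm}(M_\fm) = \length_{R_\fm}(M_\fm)$, and hence $\fm^{[p^e]} M_\fm = 0$; invoking \ref{DS}(2)(iii), the residue field $k(\fm)$ is then an $R_\fm$-direct summand of $\up{e}(M_\fm)$. In part~(ii), any single localized vanishing $\Ext^i_{R_\fm}(\up{e}(M_\fm), N_\fm) = 0$ gives $\Ext^i_{R_\fm}(k(\fm), N_\fm) = 0$, which forces $N_\fm$ to be injective by Bass's rigidity \cite[3.1.12]{BH}, so $\id_{R_\fm}(N_\fm) = 0$; in part~(i), I would dualize a single vanishing via \ref{tor-ext-duality}(i) to obtain $\Ext^i_{R_\fm}(\up{e}(M_\fm), N_\fm^\vee) = 0$, conclude that $N_\fm^\vee$ is injective, and invoke \ref{tor-ext-duality}(iii) to get $\fd_{R_\fm}(N_\fm) = \id_{R_\fm}(N_\fm^\vee) = 0$. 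The main obstacle of the whole argument is producing a single integer $e$ meeting every local threshold simultaneously, and this is precisely where the excellence/F-finiteness hypothesis earns its keep, via the uniform multiplicity bound of \cite[2.17]{MelYongwei}.
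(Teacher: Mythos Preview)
Your proposal is correct and follows essentially the same approach as the paper: localize at each maximal ideal, invoke Theorem~\ref{injective} (via the explicit bounds in Remark~\ref{bounds-on-e}) when $\dim(R_\fm)\ge 1$, handle the Artinian case by exhibiting $k(\fm)$ as a direct summand of $\up{e}(M_\fm)$, and use \cite[2.17]{MelYongwei} for the uniform finiteness of the multiplicity bound. Your write-up is in fact more detailed than the paper's in the Artinian case; the only cosmetic point is that once you have shown $\fm^{[p^e]}M_\fm=0$, the cleaner citation is \ref{DS}(2)(i) rather than \ref{DS}(2)(iii), which is exactly what the paper cites.
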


\begin{proof} Note that $s<\infty$ due to \cite[2.17]{MelYongwei}. In proving the first part, we have that $\Tor^{R_{\fm}}_i(\up{e}M_{\fm}, N_{\fm})=0$ for all $i=t,\, \dots,\,t+d-1$. If $\dim(R_{\fm})=0$, then the residue field of $R_{\fm}$ is a direct summand of $\up{e}M_{\fm}$, and hence $\fd_{R_{\fm}}(N_{\fm}) \leq \dim(R_{\fm})\leq d$; see \ref{DS}(2)(i). On the other hand, if  $\dim(R_{\fm})\geq 1$, then part (i) of Theorem~\ref{injective} yields $\fd_{R_{\fm}}(N_{\fm}) \leq d$; see Remark~\ref{bounds-on-e}(a). This implies that $\fd_R(N) \leq d$ as flat dimension can be computed locally. We can prove the second part similarly since $\sup\left\{\lceil \log_p \e_{R_{\fm}}(M_{\fm})\rceil,\, \drsp(M_{\fm}): \fm \in \Max(R) \right\}$ is finite under our setting; see Remark~\ref{bounds-on-e}(a). 
\end{proof}


\section{On the homological properties of the Frobenius endomorphism}\label{sec:Frob-proj}

The aim of this section is to prove Theorem~\ref{main-thm-intro}(iii) and also obtain Proposition~\ref{CSY-gen-intro}. Recall that Proposition~\ref{CSY-gen-intro} generalizes \cite[1.3]{CSY}, recalled as \ref{CSY} in the introduction.

The layout of this section is as follows: We begin by preparing some auxiliary results. Then we establish Theorem~\ref{main-thm-intro}(iii) in Theorem~\ref{p1}. Finally, making use of \ref{CSY0} and Theorem \ref{p1}, we produce a \hyperref[proof:CSY-gen-intro]{proof of Proposition~\ref*{CSY-gen-intro}}. Along the way, we also discuss the sharpness of our results; see Examples \ref{ornek1} and \ref{ornek2}. 

The first auxiliary result, namely Proposition \ref{p0}, is akin to \cite[2.6]{KL} and \cite[Theorem~A]{NTY}. Even though it suffices to apply the proposition to the identity map $1_R: R \to R$ in the sequel, we present it more generally in terms of a ring homomorphism $f: R \to S$.

\begin{prop} \label{p0}  Let $f: (R,\fm, k) \to (S, \fn, \ell)$ be a local homomorphism of local rings of prime characteristic $p$ with $d=\dim(R)$, $M \neq 0$ be a finitely generated $R$-module with $v=\depth_R(M)$, and let $N$ be a finitely generated $S$-module. Given $t \ge 1$ and $e \geq  \crsp(M)$, if
$\Ext^i_R(N,\up{e}M)=0$ for all $i=t,\, \dots,\,t+v$, then $\fd_R(N) \le t-1$ and hence $\pd_R(N)\leq t-1+d$.
\end{prop}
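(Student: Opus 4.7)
The plan is to combine the Frobenius restriction-of-scalars structure with a regular-sequence reduction in order to locate the residue field $k$ as an $R$-direct summand of $\up{e}(M/\x M)$, propagate the Ext-vanishing hypothesis down onto $k$, and then translate the outcome into the desired bound on $\fd_R(N)$.

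First I would extract the needed direct summand. Since $e \geq \crsp(M)$ and $v = \depth_R(M)$, the definition recalled in~\ref{bakbi} produces a (maximal) $M$-regular sequence $\x = x_1, \dotsc, x_v \subseteq \fm$ with $(0:_{M/\x M} \fm^{[p^e]}) \nsubseteq \fm^{[p^e]}(M/\x M)$; by~\ref{DS}(2)(i), the residue field $k$ is then a direct summand of $\up{e}(M/\x M)$ as an $R$-module.

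Next I would descend along $\x$ using restriction of scalars. For each $1 \leq j \leq v$, the element $x_j$ is regular on $M/(x_1, \dotsc, x_{j-1})M$ in the original $R$-structure, so applying the exact functor $\up{e}(-)$ produces a short exact sequence of $R$-modules
\[
0 \to \up{e}\bigl(M/(x_1, \dotsc, x_{j-1})M\bigr) \xra{\; x_j \;} \up{e}\bigl(M/(x_1, \dotsc, x_{j-1})M\bigr) \to \up{e}\bigl(M/(x_1, \dotsc, x_j)M\bigr) \to 0.
\]
Applying $\Hom_R(N, -)$ and reading off the long exact sequence of Ext, each inductive step narrows the vanishing range by one: starting from $\Ext^i_R(N, \up{e}M) = 0$ for $i \in [t, t+v]$, after $v$ iterations I would obtain $\Ext^t_R\bigl(N, \up{e}(M/\x M)\bigr) = 0$, and hence $\Ext^t_R(N, k) = 0$ via the direct-summand structure from the previous step.

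To conclude, I would convert this Ext-vanishing into a Tor-vanishing via the duality in~\ref{tor-ext-duality}(i) (taking $A = N$, $B = k$, $E = E_R(k)$), obtaining $\Ext^t_R(N, k) \cong \bigl(\Tor_t^R(N, k)\bigr)^\vee$; faithful exactness of $(-)^\vee$ then forces $\Tor_t^R(N, k) = 0$. Using that $N$ is finitely generated over $S$, a rigidity argument paralleling the last step of Koh-Lee~\cite[2.6]{KL} propagates this single vanishing to $\Tor_i^R(N, k) = 0$ for all $i \geq t$, yielding $\fd_R(N) \leq t - 1$; the inequality $\pd_R(N) \leq t - 1 + d$ then follows from the standard bound $\pd_R(N) \leq \fd_R(N) + \dim(R)$ for modules of finite flat dimension over a Noetherian ring of finite Krull dimension. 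The main obstacle is this final translation from the single vanishing $\Tor_t^R(N, k) = 0$ to the finiteness bound on $\fd_R(N)$: in the specialization $S = R$, where $N$ is finitely generated over $R$ itself, the bound is immediate from the structure of a minimal free $R$-resolution of $N$, but the general case requires leveraging the $S$-module structure of $N$ more carefully in order to convert a single Ext-degree vanishing into a uniform vanishing above it.
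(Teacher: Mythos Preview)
Your argument mirrors the paper's proof: both extract $k$ as a direct summand of $\up{e}(M/\x M)$ via the hypothesis $e \ge \crsp(M)$, push the Ext-vanishing down along the regular sequence $\x$ one step at a time, and arrive at $\Ext^t_R(N,k) = 0$.

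The only divergence is at the step you correctly flag as the obstacle. The paper does not attempt a Koh--Lee-style rigidity argument; it simply invokes \cite[2.1]{NTY}, which gives the implication $\Ext^t_R(N,k)=0 \Rightarrow \fd_R(N) \le t-1$ directly for $N$ finitely generated over $S$ via a local homomorphism $(R,\fm,k) \to (S,\fn,\ell)$. Your detour through $\Tor_t^R(N,k)=0$ is valid (indeed $\Ext^t_R(N,k) \cong \big(\Tor_t^R(N,k)\big)^\vee$ since $\Hom_R(k,E_R(k)) \cong k$), but it does not bypass the issue: a single vanishing of $\Tor$ against $k$ does not force flatness for arbitrary modules, and the passage genuinely requires the finite generation of $N$ over the auxiliary local ring $S$. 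That is exactly what the Nasseh--Tousi--Yassemi result supplies, so rather than paralleling \cite[2.6]{KL} you should cite \cite[2.1]{NTY}. For the final inequality $\pd_R(N) \le t-1+d$, the paper appeals to \cite[4.2.8]{FlatCover} (flat modules over a Noetherian ring of Krull dimension $d$ have projective dimension at most $d$), which is the same fact underlying your stated bound $\pd_R(N) \le \fd_R(N) + \dim(R)$.
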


\begin{proof} By our choice of $e$, there exists a maximal $M$-regular sequence $\x = \{x_1, \dots,x_v\}$ such that $k$ is a direct summand of $\up{e}(M/(\x)M)$ over $R$; see \ref{DS} and \ref{bakbi}. 

As $x_1$ is $M$-regular, there is an short exact sequence $0 \to \up{e}M \to \up{e}M \to \up{e}(M/x_1M) \to 0$. This, together with $\Ext^i_R(N,\up{e}M)=0$ for $i=t,\, \dots,\,t+v$, implies that 
\[\Ext^i_R(N,\up{e}(M/x_1M))=0\quad \text{for all} \quad i=t,\, \dots,\,t+v-1. \]
Inductively, as $\x = \{x_1, \dots,x_v\}$ is $M$-regular, we get $\Ext^t_R(N,\up{e}(M/(\x)M))=0$. Since $k$ is a direct summand of $\up{e}(M/(\x)M)$, we see that $\Ext^t_R(N,k)=0$,
which implies $\fd_R(N) \le t-1$; see \cite[2.1]{NTY}. Note that every flat $R$-module has projective dimension at most $d$; see \cite[4.2.8]{FlatCover}. Therefore, we conclude that $\pd_R(N)\leq t-1+d$.
\end{proof}

As in Propositions~\ref{prop:injective} and ~\ref{p0}, we present Proposition~\ref{prop1} in the context of a general ring homomorphism $f: R \to S$, allowing us to distinguish various module structures in the proof. When Proposition~\ref{prop1} is applied in the proof of Theorem~\ref{p1}, the homomorphism will be $F^e: R \to R$, the $e$-th iteration of the Frobenius endomorphism.

\begin{prop}\label{prop1} Let $(R,\fm,k)$ be a local ring, $f: R \to S$ be a ring homomorphism, $N$ be a finitely generated $R$-module, and let $M$ be a finitely generated $S$-module. Assume the following hold:
\begin{enumerate}[\rm(i)]
\item $\Ext^i_R(N,M)=0$ for all $i=t,\, \dots,\,t+d-1$ for some $d\geq 1$ and $t\geq 1$. 
\item There exists $\x = \{x_1, \dotsc, x_d\} \subseteq \Jac(S)$ such that $\x$ is $M$-regular and $\fm M \subseteq (\x)M$. 
\end{enumerate}
Then $\Hom_R\big(\Omega^t_R(N),M\big) = 0$. 
\end{prop}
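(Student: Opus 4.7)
My plan is to prove Proposition~\ref{prop1} by induction on $d \geq 1$, with both the base case and the inductive step culminating in an application of Nakayama's Lemma to the module $H := \Hom_R(\Omega^t_R N, M)$. A key preliminary observation is that $H$ is finitely generated over $S$: picking generators $y_1, \dotsc, y_n$ of the finitely generated $R$-module $\Omega^t_R N$, the evaluation map $\phi \mapsto (\phi(y_1), \dotsc, \phi(y_n))$ is an $S$-linear embedding of $H$ into $M^n$, which is finitely generated over the Noetherian ring $S$.

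For the base case $d = 1$, the vanishing $\Ext^t_R(N, M) = 0$ together with the short exact sequence $0 \to \Omega^t_R N \to F_{t-1} \to \Omega^{t-1}_R N \to 0$ (extracted from a minimal free resolution $F_\bullet \to N$) shows that every $\phi \in H$ lifts to some $\tilde\phi \in \Hom_R(F_{t-1}, M)$. Minimality of the resolution yields $\Omega^t_R N \subseteq \fm F_{t-1}$, and combining this with the hypothesis $\fm M \subseteq x_1 M$ (which is exactly $\fm M \subseteq (\x)M$ in the case $d=1$) gives
\[
\phi(\Omega^t_R N) = \tilde\phi(\Omega^t_R N) \subseteq \fm\, \tilde\phi(F_{t-1}) \subseteq \fm M \subseteq x_1 M.
\]
Since $x_1$ is $M$-regular, multiplication by $x_1$ on $M$ is injective, so $\phi$ lifts uniquely to some $\psi \in H$ with $\phi = x_1 \psi$. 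Therefore $H = x_1 H$, and since $x_1 \in \Jac(S)$ and $H$ is finitely generated over $S$, Nakayama's Lemma gives $H = 0$.

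For the inductive step $d \geq 2$, I would set $\bar M := M / x_1 M$ and apply the proposition inductively to the triple $(d-1, \bar M, \{x_2, \dotsc, x_d\})$. The hypotheses transfer cleanly: the $\Hom_R(N, -)$-long exact sequence of $0 \to M \xrightarrow{x_1} M \to \bar M \to 0$ combined with $\Ext^i_R(N, M) = 0$ for $i = t, \dotsc, t+d-1$ yields $\Ext^i_R(N, \bar M) = 0$ for $i = t, \dotsc, t+d-2$; the sequence $\{x_2, \dotsc, x_d\}$ is $\bar M$-regular and contained in $\Jac(S)$; and $\fm \bar M \subseteq (x_2, \dotsc, x_d)\bar M$ follows by reducing $\fm M \subseteq (\x) M$ modulo $x_1 M$. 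The induction hypothesis then produces $\Hom_R(\Omega^t_R N, \bar M) = 0$, and applying $\Hom_R(\Omega^t_R N, -)$ to $0 \to M \xrightarrow{x_1} M \to \bar M \to 0$ yields the exact segment $H \xrightarrow{x_1} H \to 0$, whence $H = x_1 H$ and Nakayama's Lemma again forces $H = 0$. The main subtlety lies in the base case, where the crucial upgrade from ``image in $\fm M$'' to ``image in $x_1 M$'' relies on $d = 1$; when $d \geq 2$, $\fm M$ need not sit inside any single $x_i M$, which is precisely why we must introduce the intermediate module $\bar M$ and argue inductively.
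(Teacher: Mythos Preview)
Your proof is correct. Both you and the paper rely on the same three ingredients---minimality of the free resolution (giving $\Omega^t_R N \subseteq \fm F_{t-1}$), the containment $\fm M \subseteq (\x)M$, and Nakayama over $S$---but organize them differently. The paper works with the full resolution at once: it applies $\Hom_R(-,M)$ to the minimal free resolution, uses the $d$ consecutive $\Ext$-vanishings to produce an exact sequence of $S$-modules of length $d+2$, then reduces that entire sequence modulo the whole regular sequence $\x$ in one pass; since $M/(\x)M$ is annihilated by $\fm$, the map $\Hom_R(h_t, M/(\x)M)$ vanishes by minimality, and Nakayama applied to $\Hom_R(F_{t-1},M)$ forces $\Hom_R(h_{t+1},M)$ to be injective. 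Your induction on $d$ instead peels off one $x_i$ at a time and applies Nakayama directly to $H=\Hom_R(\Omega^t_R N,M)$ at each stage. Your argument is a bit more streamlined (no long diagram of $M_i$'s to track), while the paper's version makes the role of the full window of $d$ vanishing $\Ext$ groups visible in a single exact sequence; neither approach gains or loses any real generality.
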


\begin{proof} 
Consider a minimal free resolution of $N$ over $R$:
\[
\F= \ (\dotsb \xra{\ \ \ } F_{t+d} \xra{h_{t+d}} F_{t+d-1} \xra{h_{t+d-1}} \dotsb
\xra{h_{t+1}} F_{t} \xra{\ h_{t}\ } F_{t-1} \xra{\ \ \ } \dotsb \xra{\ \ \ } 0).
\]
Applying $\Hom_R(-,M)$ to $\F$ and using assumption~(i),
we get an induced exact sequence: 
\[\label{prop1-seq}\tag{\ref*{prop1}.1}
M_{t+d} \xla{g_{t+d}} M_{t+d-1} \xla{g_{t+d-1}} \dotsb
\xla{g_{t+1}} M_{t} \xla{\ g_{t}\ } M_{t-1}
\xla{\ \iota\ } G \xla{\ \ \ } 0,
\]
in which $M_i= \Hom_R(F_i,M)$, $g_i= \Hom_R(h_i,M)$, 
and $G= \ker(g_t)$.   
All $M_i$, and hence $G$, are $S$-modules; see~\ref{RS}(ii).
In fact, each $M_i$ is isomorphic to a finite direct sum of $M$. Thus all $M_i$ are finitely generated $S$-modules.

As $F_i$ is free over $R$ and $x_1$ is regular on $M$, an application of $\Hom_R(F_i,-)$ to 
the exact sequence $0 \to M \xra{\ x_1 \ } M \xra{\ \ \ } M/x_1M \to 0$ induces 
an short exact sequence:
\[ \tag{$\Gamma_i$}\label{prop1-seq-i}
0 \lra M_i \xra{\ x_1 \ } M_i \lra \Hom_R(F_i,M/x_1M) \lra 0.
\]
We combine the exact sequences \eqref{prop1-seq-i}, for all $i = t-1,\dotsc, t+d$, 
with the exact sequence \eqref{prop1-seq} and obtain the following exact sequence of $S$-modules:
\[
\Hom_R(F_{t+d-1},M/x_1M) \la \dotsb \la \Hom_R(F_{t-1},M/x_1M) \la G/x_1G \la 0.
\]
Inductively, as $\x = \{x_1,\dotsc,x_d\}$ is an $M$-regular sequence, 
we realize the following exact sequence of $S$-modules that is naturally induced from \eqref{prop1-seq}:
\[\label{prop1-seq2}\tag{\ref*{prop1}.2}
\ol{M_{t}} \xla{\ \ \ol{g_{t}}\ \ } \ol{M_{t-1}} \xla{\ \ \ol{\iota}\ \ } \ol{G} \xla{\ \ \ \ \ } 0,
\]
in which $\ol {M_i} = \Hom_R(F_i,M/(\x)M)$, $\ol{g_t}= \Hom_R(h_t,M/(\x)M)$ and $\ol G= G/(\x)G$. 
Up to isomorphism, we may write $\ol {M_i} = M_i/(\x)M_i= M_i \otimes_S S/(\x)S$, $\ol G = G \otimes_S S/(\x)S$ and thus $\ol{\iota} = \iota \otimes 1_{S/(\x)S}$. 

The assumption $\fm M \subseteq (\x)M$ implies that $M/(\x)M$ is annihilated by $\fm$. 
Since $\F$ is a minimal free resolution of $N$, we conclude $\ol{g_{t}} = \Hom_R(h_t, M/(\x)M) = 0$. Thus, the exactness of \eqref{prop1-seq2} forces $\im(\ol{\iota}) = \ker(\ol{g_t}) =\ol{M_{t-1}}$, meaning that $\im(\iota) + (\x)M_{t-1} = M_{t-1}$. As $M_{t-1}$ is finitely generated over $S$ and $(\x)S \subseteq \Jac(S)$, 
we obtain $\im(\iota) = M_{t-1}$, thanks to Nakayama's lemma. This forces $\ker(g_{t+1}) = \im(g_t) = 0$ due to the exactness of \eqref{prop1-seq}.
Finally, as $\Omega^t_R(N) \cong \coker(h_{t+1})$, we see
\[
\Hom_R\big(\Omega^t_R(N),M\big) \cong \ker\big(\Hom_R(h_{t+1},M)\big) = \ker(g_{t+1}) = 0. \qedhere
\]
\end{proof}

Equipped with Propositions~\ref{p0} and \ref{prop1}, we are now ready to prove the result stated in Theorem~\ref{main-thm-intro}(iii).

\begin{thm}\label{p1} Let $(R,\fm, k)$ be a $d$-dimensional local ring of prime characteristic $p$, $M$ be a finitely generated Cohen-Macaulay $R$-module such that $\Supp_R(M)=\Spec(R)$, and let $N$ be a finitely generated $R$-module. Assume $\depth(R)\geq 1$. Given $t\geq 1$ and $e \gg 0$, if $\Ext^i_R(N,\up{e}M)=0$ for all $i=t,\, \dots,\,t+d-1$, then $\pd_R(N)\leq \min\{t-1,\,\depth(R)\}$. 
\end{thm}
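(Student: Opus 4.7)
The plan is to apply Proposition~\ref{prop1} with the ring homomorphism $f = F^e : R \to R$ (so $S = R$), letting the ``$M$'' in Proposition~\ref{prop1} be our original $M$ viewed as an $S$-module; pulled back along $f$, this $S$-module acquires precisely the $\up{e}M$-structure over $R$. Consequently, hypothesis (i) of Proposition~\ref{prop1} translates into the given vanishing $\Ext^i_R(N,\up{e}M) = 0$ for $i = t,\dots,t+d-1$.

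To verify hypothesis (ii), fix $e \geq \drsp(M)$; see~\ref{bakbi}. By definition of $\drsp$, there exists a system of parameters $\x = \{x_1, \dots, x_d\} \subseteq \fm$ on $M$ with $\fm^{[p^e]} M \subseteq (\x) M$. Since $M$ is Cohen-Macaulay of dimension $d$ with $\Supp_R(M) = \Spec(R)$, the sequence $\x$ is in fact $M$-regular. Under the identification $f = F^e$, the term ``$\fm M$'' appearing in Proposition~\ref{prop1}(ii) equals $f(\fm) M = \fm^{[p^e]} M$, while $(\x) M$ has its ordinary meaning because $S = R$ acts identically on $M$ (and $\x \subseteq \fm = \Jac(S)$). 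Hence the required containment $\fm M \subseteq (\x) M$ becomes $\fm^{[p^e]} M \subseteq (\x) M$, which holds by choice of $\x$.

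Applying Proposition~\ref{prop1}, we obtain $\Hom_R\big(\Omega^t_R N,\, \up{e}M\big) = 0$. Since $\Supp_R(\up{e}M) = \Supp_R(M) = \Spec(R)$ (the supports coincide as $\sqrt{\Ann_R(\up{e}M)} = \sqrt{\Ann_R(M)}$), we invoke~\ref{hom-tensor=0}(iii) to conclude $\Omega^t_R N = 0$, and thus $\pd_R(N) \leq t-1 < \infty$. The Auslander--Buchsbaum formula then yields $\pd_R(N) + \depth_R(N) = \depth(R)$, so in particular $\pd_R(N) \leq \depth(R)$. Combining the two bounds gives $\pd_R(N) \leq \min\{t-1,\, \depth(R)\}$, as desired.

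The main obstacle is conceptual rather than technical: one must recognize that the $\up{e}M$-structure is obtained by restricting scalars along $F^e$, so that the hypothesis of Proposition~\ref{prop1} on a parameter-type containment ``$\fm M \subseteq (\x) M$'' under the auxiliary homomorphism $f$ matches exactly the $\drsp$-condition applied to the original module $M$. Once this translation is in place, the rest is routine assembly of the preliminaries (Nakayama via Proposition~\ref{prop1}, then~\ref{hom-tensor=0}(iii), then Auslander--Buchsbaum).
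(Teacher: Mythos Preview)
There is a genuine gap at the point where you invoke \ref{hom-tensor=0}(iii). That item says: if the \emph{first} argument of $\Hom$ has full support, then $\Hom_R(M,N)=0$ forces the \emph{second} argument $N$ to vanish. You apply it with the roles reversed: from $\Hom_R\big(\Omega^t_R N,\,\up{e}M\big)=0$ together with $\Supp_R(\up{e}M)=\Spec(R)$ you conclude $\Omega^t_R N=0$. The lemma does not say this, and the implication is false in general. For instance, take $R=\mathbb{F}_p[\![x,y,z]\!]/(x^2,xy)$ (so $\depth(R)=1$) and $M=R/(x)\cong\mathbb{F}_p[\![y,z]\!]$, which is Cohen--Macaulay with $\Supp_R(M)=\Spec(R)$; then $\Hom_R\big(R/(y,z),\,M\big)=0$ although $R/(y,z)\neq 0$. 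The underlying issue is that $\up{e}M$, being Cohen--Macaulay with full support, has $\Ass_R(\up{e}M)=\Min(R)$, while an associated prime of the syzygy $\Omega^t_R N$ may be an \emph{embedded} prime of $R$; so \ref{hom-tensor=0}(iv) gives no contradiction. A further warning sign: your argument never uses the hypothesis $\depth(R)\ge 1$, yet Example~\ref{ex:ext-2} shows the theorem fails without it.

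The paper's proof reaches the same intermediate conclusion $\Hom_R\big(\Omega^t_R N,\,\up{e}M\big)=0$ via Proposition~\ref{prop1}, but then finishes with an extra local input. Assuming $\Omega^t_R N\neq 0$, pick $\fp\in\Ass_R(\Omega^t_R N)\subseteq\Ass(R)$; since $\depth(R)\ge 1$ we have $\fp\neq\fm$, so $\depth_{R_\fp}(M_\fp)\le d-1$. The $d$ given Ext-vanishings localized at $\fp$ then supply the $\depth_{R_\fp}(M_\fp)+1$ vanishings required by Proposition~\ref{p0}, whence $\pd_{R_\fp}(N_\fp)<\infty$ and $N_\fp$ is free. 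Thus $(\Omega^t_R N)_\fp$ is a nonzero free $R_\fp$-module, and since $(\up{e}M)_\fp\neq 0$ by full support, $\Hom_{R_\fp}\big((\Omega^t_R N)_\fp,(\up{e}M)_\fp\big)\neq 0$, contradicting the global vanishing. This localization step via Proposition~\ref{p0} is precisely what your shortcut omits.
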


\begin{proof} Note that, by our choice of $e$, there exists $\x = \{x_1, \,\dotsc,\,x_d \} \subseteq \fm$ such that $\x$ is $M$-regular and $\fm (\up{e}M) \subseteq \up{e}\big((\x)M\big)$; see \ref{bakbi}. Upon an application of Proposition~\ref{prop1} to $F^e: R \to R$, the vanishing of $\Ext^i_R(N,\up{e}M)$ for $i=t,\, \dots,\,t+d-1$ implies $\Hom_{R}\big(\Omega^t_R(N), \up{e}M\big) = 0$.

To prove the claim by contradiction, suppose $\Omega_R^t(N)\neq 0$ and pick $\fp \in \Ass_R(\Omega_R^t(N))\subseteq \Ass(R)$. Note that $\fp \neq \fm$ since $\depth(R)\ge 1$. It follows from Proposition~\ref{p0} that $\pd_{R_{\fp}}(N_{\fp})<\infty$. Hence, $N_{\fp}$ is free by the Auslander-Buchsbaum formula. 
So,  $\Omega^t_R(N)_\fp \neq 0$ is free over $R_\fp$. Also, since $\Supp_R(\up{e}M) = \Supp_R(M)=\Spec(R)$, we see that $(\up{e}M)_\fp \neq 0$. 
Hence $\Hom_{R_\fp}(\Omega^t_R(N)_\fp, (\up{e}M)_\fp) \neq 0$, which contradicts the conclusion $\Hom_{R}\big(\Omega^t_R(N), \up{e}M\big) = 0$.
Thus $\Omega_R^t(N)=0$, so $\pd_R(N) \le t-1$. This proves that $\pd_R(N)\leq \min\{t-1,\,\depth(R)\}$.
\end{proof}

We now record several corollaries of Theorem \ref{p1}. It is worth noting that the positive depth assumption in the theorem is necessary; see Example \ref{ex:ext-2}.

\begin{cor}\label{cor:dual-p1} Let $(R,\fm, k)$ be a $d$-dimensional local ring of prime characteristic $p$, $M$ be a finitely generated Cohen-Macaulay $R$-module such that $\Supp_R(M)=\Spec(R)$, and let $N$ be a finitely generated $R$-module. Assume $\depth(R)\geq 1$. Given $t\geq 1$ and $e \gg 0$, assume that at least one of the following conditions holds:
\begin{enumerate}[\rm(i)]
\item $\Tor_i^R\Big(N, \big(\up{e}M\big)^\vee\Big)=0$ for all $i = t, \dotsc, t+d-1$.
\item 
$\Tor_i^R\Big(N, \up{e}\big(M^\vee\big)\Big)=0$ for all $i = t, \dotsc, t+d-1$. 
\end{enumerate}
Then $\pd_R(N) \leq \min\{t-1,\,\depth(R)\}$.
\end{cor}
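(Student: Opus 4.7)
The plan is to reduce both parts of the corollary to Theorem~\ref{p1} by converting the given Tor-vanishing into the Ext-vanishing $\Ext_R^i(N,\up{e}M) = 0$ for $i = t, \ldots, t+d-1$. The strategy mirrors the treatment of Corollary~\ref{cor:dual-injective}, but uses the finitely-presented variant~\ref{tor-ext-duality-general}(ii) of Hom-Tor duality (rather than (i)), which is legitimate since $N$ is finitely generated over the Noetherian ring $R$, hence finitely presented.

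For part~(i), I would apply \ref{tor-ext-duality-general}(ii) to the identity map $f = \id_R$, with $A = N$, $B = \up{e}M$, and $E = E_R(k)$, obtaining the natural isomorphism
\[
\Hom_R\big(\Ext_R^i(N, \up{e}M),\, E_R(k)\big)\ \cong\ \Tor_i^R\big(N,\, (\up{e}M)^\vee\big).
\]
The hypothesis makes the right-hand side vanish for $i = t, \ldots, t+d-1$, and since the Matlis duality functor $(-)^\vee$ is faithful (\ref{Begin}(i)), this forces $\Ext_R^i(N,\up{e}M) = 0$ in the same range. Theorem~\ref{p1} then delivers $\pd_R(N) \le \min\{t-1,\,\depth(R)\}$.

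For part~(ii), I would apply \ref{tor-ext-duality-general}(ii) to the $e$-th iterated Frobenius $f = F^e : R \to R$, with $A = N$ (finitely presented), $B = M$ viewed as a target $R$-module, and $E = E_R(k)$. Viewing $B = M$ back as a source $R$-module via $F^e$ gives $\up{e}M$, while $\Hom_R(B, E) = M^\vee$, regarded back as a source $R$-module via $F^e$, gives exactly $\up{e}(M^\vee)$; this is the same structural bookkeeping used to prove case~(ii) of Corollary~\ref{cor:dual-injective}. The identity now reads
\[
\Hom_R\big(\Ext_R^i(N, \up{e}M),\, E_R(k)\big)\ \cong\ \Tor_i^R\big(N,\, \up{e}(M^\vee)\big),
\]
and the same faithfulness argument yields $\Ext_R^i(N,\up{e}M) = 0$ for $i = t, \ldots, t+d-1$, after which Theorem~\ref{p1} concludes the proof.

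The argument is essentially formal — there is no genuine obstacle — but the only place that requires care is the second identity, where one must correctly keep track of which $R$-action on $M^\vee$ comes from the target and which from the source of $F^e$, so that $\Hom_S(B,E)$ really is $\up{e}(M^\vee)$ as an $R$-module. Once that identification is in place, both parts are immediate.
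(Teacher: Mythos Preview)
Your proposal is correct and follows essentially the same approach as the paper's proof: both parts are reduced to Theorem~\ref{p1} by applying \ref{tor-ext-duality-general}(ii) (with $f=\id_R$ for part~(i) and $f=F^e$ for part~(ii)) to convert the Tor-vanishing into $\Ext^i_R(N,\up{e}M)=0$, using faithfulness of $(-)^\vee$. Your additional bookkeeping about which $R$-action on $M^\vee$ yields $\up{e}(M^\vee)$ is accurate and matches the paper's implicit use of the same duality.
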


\begin{proof} 
The vanishing of $\Tor_i^R\big(N, (\up{e}M)^\vee\big)$ yields the vanishing of $\Ext^i_R\big(N, \up{e}M\big)$; see \ref{tor-ext-duality}(ii). Hence case~(i) follows from Theorem~\ref{p1}. Similarly, the vanishing of $\Tor_i^R\big(N,\, \up{e}(M^\vee)\big)$ yields the vanishing of $\Ext^i_R\big(N, \up{e}M\big)$; see \ref{tor-ext-duality-general}(ii). Therefore, case~(ii) also follows from Theorem~\ref{p1}.
\end{proof}

Recall that $F^e_R(-)$ denotes the scalar extension along $F^e: R \to R$, the $e$-th iteration of the Frobenius endomorphism. Given $R$-modules $X$ and $Y$, it follows that $X \otimes_R \up{e}Y \cong \up{e}(F^e_R(X) \otimes_R Y)$ and $F^e_{R_\fp}(X_\fp) \cong (F^e_R(X))_\fp$ for all $\fp \in \Spec(R)$. Moreover, $\Supp_R(X) = \Supp_R(F^e_R(X))$ in case $X$ is a finitely $R$-module.

Next, we provide a proof of Proposition~\ref{CSY-gen-intro} and restate it here the convenience of the reader.

\begin{cor} \label{CSY-gen} Let $(R,\fm, k)$ be a local ring of prime characteristic $p$, and let $M$ and $N$ be finitely generated $R$-modules. Assume the following conditions hold:
\begin{enumerate}[\rm(i)]
\item $M$ is Cohen-Macaulay and $\Supp_R(M) = \Spec(R)$. 
\item $N$ is generically free, that is, $N_{\fp}$ is a free $R_{\fp}$-module for all $\fp \in \Ass(R)$. 
\end{enumerate}
If $F_R^e(N)\otimes_RM$ is a maximal Cohen-Macaulay $R$-module for some $e \gg 0$, then $N$ is free.  
\end{cor}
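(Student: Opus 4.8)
The plan is to reduce the statement to an application of Theorem~\ref{p1} combined with \ref{CSY0}, after first disposing of the low-dimensional and degenerate cases by the standard faithfully-flat base change of \ref{finite}. First I would pass to the extension $S$ of \ref{finite}, noting that $M \otimes_R S$ is Cohen-Macaulay with full support over $S$, that $N \otimes_R S$ is generically free over $S$ (since $\Ass(S)$ lies over $\Ass(R)$ and flatness preserves freeness), that $F^e_S(N\otimes_R S)\otimes_S(M\otimes_R S) \cong (F^e_R(N)\otimes_R M)\otimes_R S$ is maximal Cohen-Macaulay over $S$, and that freeness descends along $R \to S$. Thus we may assume $R$ is F-finite with infinite residue field. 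The case $d=\dim R=0$ is immediate: then $N$ is generically free over the Artinian ring $R$ forces... actually more carefully, one observes that maximal Cohen-Macaulayness over an Artinian ring is vacuous, so instead I would handle $\depth(R)=0$ directly, which is the case $d$ arbitrary but $\depth(R)=0$ — here I would reduce to $\depth(R)\ge 1$ by a prime avoidance / depth-chasing argument, or simply note that the interesting case is $\depth(R)\ge 1$ and treat $\depth(R)=0$ separately (when $\depth(R)=0$, no nonzero sequence is $M$-regular, so the hypothesis "maximal Cohen-Macaulay" on $F^e_R(N)\otimes_R M$ means it has depth $d$; if $d\ge 1$ this is still substantive, and I would localize at a minimal prime to extract information). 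Let me instead organize around $\depth(R)$.

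The core case is $\depth(R) \ge 1$. Here I want to invoke \ref{CSY0} with the module $\up{e}M$ (or rather $M$ and the $S$-module $N\otimes_R S$, using $F^e$): the key identity is $F^e_R(N)\otimes_R M \cong \up{e}\big(N \otimes_R \up{e}M\big)$ — wait, the correct identity from the excerpt is $X \otimes_R \up{e}Y \cong \up{e}(F^e_R(X) \otimes_R Y)$, so $N \otimes_R \up{e}M \cong \up{e}(F^e_R(N) \otimes_R M)$. Since restriction of scalars along $F^e$ preserves depth (as $F^e$ is module-finite when $R$ is F-finite, or by a direct argument), $\depth_R(N\otimes_R\up{e}M) = \depth_R(F^e_R(N)\otimes_R M) = d$. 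Now apply \ref{CSY0} with $n = d$: conditions (i) $N_\fp$ free for $\fp \ne \fm$ needs to be established, (ii) $\depth_R(M' \otimes_R N)\ge d$ where here $M' = \up{e}M$ and $N$ plays the role of... hmm, the roles of $M$ and $N$ in \ref{CSY0} must be matched carefully. In \ref{CSY0}, the hypotheses are on $M$ (with $\depth_R(M)\ge n-1$) and $N$ (generically free, with $\depth_R(M\otimes_R N)\ge n$), concluding $\Ext^i_R(\Tr_R N, M) = 0$ for $i = 1, \dots, n$. So I set the "$M$" of \ref{CSY0} to be $\up{e}M$ (which has $\depth_R(\up{e}M) = \depth_R(M) = d \ge d-1$) and the "$N$" of \ref{CSY0} to be our $N$. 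To check hypothesis (i) of \ref{CSY0}, that $N_\fp$ is free for all $\fp \ne \fm$: this does not follow merely from generic freeness, so here I would run an induction on $\dim R$ — localizing at a nonmaximal prime $\fp$, the hypotheses are inherited (using $F^e_{R_\fp}(N_\fp)\otimes_{R_\fp} M_\fp$ is a localization of a maximal CM module, hence CM; and $M_\fp$ has full support and is CM over $R_\fp$), so by induction $N_\fp$ is free. The base of the induction ($\dim R = 1$, where nonmaximal primes are associated primes, covered by generic freeness) works, and the $\depth(R)=0$ base is handled separately as above.

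So the inductive step gives that $N_\fp$ is free for all $\fp \ne \fm$, and then \ref{CSY0} with $n=d$ yields $\Ext^i_R(\Tr_R N, \up{e}M) = 0$ for $i = 1, \dots, d$. Now apply Theorem~\ref{p1} to the module $\Tr_R N$ with $t = 1$: we conclude $\pd_R(\Tr_R N) \le \min\{0, \depth(R)\} = 0$, i.e. $\Tr_R N$ is free, whence $N$ is free by \ref{AuBrsequence}(iv). I expect the main obstacle to be the bookkeeping in the induction on dimension: making sure the "$e \gg 0$" is uniform — i.e. that a single $e$ works simultaneously for $R$ and for all relevant localizations $R_\fp$ — and verifying the depth-preservation of restriction along $F^e$ and the various base-change compatibilities. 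The bound on $e$ is handled by \ref{bakbi2}, which shows $\sup\{\crsp(M_\fp)\} \le \lceil\log_p \e_R(M)\rceil$ when $|k| = \infty$ (arranged by the reduction to $S$), so a uniform choice $e \ge \lceil \log_p \e_R(M)\rceil$ suffices throughout the induction; this is the technical heart and where I would be most careful. A secondary subtlety is confirming that the $\depth(R) = 0$ case genuinely forces $N$ free (here $N$ generically free over a zero-depth ring together with $F^e_R(N)\otimes_R M$ having depth $d$ — if $d = 0$ as well the conclusion should be handled by reducing to $\dim R \ge 1$ via the support condition, or by observing the statement's content only bites when $\depth R \ge 1$ and citing Example-type degeneracy; I would make this precise).
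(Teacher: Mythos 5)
Your proposal follows the paper's proof essentially verbatim: reduce via \ref{finite} to the F-finite, infinite residue field case, induct on $\dim R$, apply \ref{CSY0} with $n=d$ to the pair $\up{e}M$ and $N$, and then invoke Theorem~\ref{p1} together with \ref{AuBrsequence}(iv). The one place you leave unresolved --- the $\depth(R)=0$ base case --- has a one-line argument that you circle around but never land on: if $\depth(R)=0$ then $\fm \in \Ass(R)$, so generic freeness of $N$ already gives that $N = N_\fm$ is free outright, with no need for prime avoidance, depth chasing, or any separate handling of the Artinian subcase. Your concern about the uniformity of $e$ across the localizations $R_\fp$ arising in the induction is well placed and is settled exactly as you suspect: once $|k|=\infty$ (arranged via \ref{finite}), \ref{bakbi2} gives $\sup_\fp \crsp(M_\fp) \le \lceil \log_p \e_R(M) \rceil$, so a single choice $e \ge \lceil \log_p \e_R(M) \rceil$ works simultaneously at every prime, which is what the paper implicitly relies on.
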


\begin{proof} 
\phantomsection\label{proof:CSY-gen-intro} Without loss of generality we may assume $R$ is F-finite and $|k| = \infty$; see \ref{finite}. Set $d=\dim(R)$ and proceed by induction on $d$.  

As $R$ is F-finite, the assumption that $F_R^e(N)\otimes_RM$ is maximal Cohen-Macaulay can be interpreted as that the module $N \otimes_R \up{e}M$ is maximal Cohen-Macaulay. If $\depth(R)=0$, then $N$ is free since we assume it is generically free. Hence, we assume $\depth(R)\geq 1$. Note that, by the induction hypothesis, we may assume $N_{\fp}$ is a free  $R_\fp$-module for all $ \fp \in \Spec(R) - \{\fm\}$. Note also that $\depth_R(\up{e}M)=d = \depth_R(N \otimes_R \up{e}M)$. 
Therefore, we use \ref{CSY0} for the case where $n=d$ and conclude that $\Ext^i_R(\Tr_R N, \up{e}M)=0$ for all $i=1, \ldots, d$. Now Theorem~\ref{p1} implies that $\Tr_R N$ is free. Consequently, $N$ is free; see \ref{AuBrsequence}(iv).
\end{proof}

\begin{cor} \label{min1} Let $(R,\fm, k)$ be a local ring of prime characteristic $p$, and let $M$ and $N$ be finitely generated $R$-modules. Assume the following conditions hold:
\begin{enumerate}[\rm(i)]
\item $M$ is Cohen-Macaulay.
\item $M$ has constant rank on $\Min(R)$ and $N$ is generically free.
\end{enumerate}
If $F_R^e(N)\otimes_RM$ is maximal Cohen-Macaulay for some $e \gg 0$, then $N$ is free. 
\end{cor}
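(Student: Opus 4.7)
The plan is to deduce Corollary~\ref{min1} from Corollary~\ref{CSY-gen} via a short case analysis on the common rank $r \geq 0$ of $M$ at the minimal primes of $R$. The guiding observation is that, once $r \geq 1$, the constant-rank condition automatically upgrades to $\Supp_R(M) = \Spec(R)$, which is exactly the support hypothesis needed to invoke Corollary~\ref{CSY-gen}.

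In the main case $r \geq 1$, for every $\fp \in \Min(R)$ the localization $M_\fp$ is $R_\fp$-free of positive rank, hence nonzero, so $\fp \in \Supp_R(M)$. Since $\Supp_R(M)$ is closed under specialization and every prime of $R$ contains some minimal prime, it will follow that $\Supp_R(M) = \Spec(R)$. All the remaining hypotheses of Corollary~\ref{CSY-gen} are already in place --- $M$ is Cohen-Macaulay, $N$ is generically free, and $F_R^e(N) \otimes_R M$ is maximal Cohen-Macaulay for some $e \gg 0$ --- so Corollary~\ref{CSY-gen} will directly yield that $N$ is free.

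The degenerate case $r = 0$ will be ruled out by a dimension count. Assuming $M \neq 0$, we would have $M_\fp = 0$ for every $\fp \in \Min(R)$; since $M$ is Cohen-Macaulay, $\Ass_R(M) = \Assh_R(M)$, and any $\fp \in \Ass_R(M)$ would strictly contain some minimal prime $\fq$ of $R$, yielding $\dim_R(M) = \dim(R/\fp) < \dim(R/\fq) \leq \dim R$. Consequently, $\dim_R(F_R^e(N) \otimes_R M) \leq \dim_R(M) < \dim R$, which forces $\depth_R(F_R^e(N) \otimes_R M) < \dim R$ and contradicts the maximal Cohen-Macaulay hypothesis. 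Thus this case will not arise, and there is essentially no substantive obstacle: once one identifies that positive constant rank on $\Min(R)$ forces full support, the proof reduces to a direct application of Corollary~\ref{CSY-gen}, and the $r = 0$ case is dispatched by the one-line dimension count above.
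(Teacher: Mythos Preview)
Your proof is correct and follows essentially the same approach as the paper: both arguments reduce to Corollary~\ref{CSY-gen} by showing that the constant-rank hypothesis together with the maximal Cohen--Macaulay assumption forces $\Supp_R(M)=\Spec(R)$, via the dimension inequality $\dim_R(F_R^e(N)\otimes_R M)\le \dim_R(M)<\dim(R)$ when $\Min(R)\cap\Supp_R(M)=\emptyset$. The paper phrases this as a single proof by contradiction rather than an explicit case split on $r$, and does not invoke $\Ass_R(M)=\Assh_R(M)$ (the dimension drop follows already from $\Supp_R(M)\cap\Min(R)=\emptyset$), but these are cosmetic differences.
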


\begin{proof} Suppose $\Min(R) \nsubseteq \Supp_R(M)$. Then $\Supp_R(M) \cap \Min(R)=\emptyset$ since $M$ has constant rank on $\Min(R)$. This implies that $\dim_R(M)<\dim(R)$. However, this is not possible as $\dim(R)=\dim_R\big(F_R^e(N)\otimes_RM\big)\leq \dim_R(M)$. So, $\Min(R) \subseteq \Supp_R(M)$ and hence $\Supp_R(M)=\Spec(R)$. Now, the claim follows from Theorem \ref{CSY-gen}.
\end{proof}

\begin{cor} \label{min2} Let $(R,\fm, k)$ be a local ring of prime characteristic $p$, $M$ be a finitely generated Cohen-Macaulay $R$-module, and let $N$ be a finitely generated $R$-module. Assume:
\begin{enumerate}[\rm(i)]
\item For each $\fp \in \Min(R)$, there is an integer $r_{\fp}\geq 1$ such that $M_{\fp}\cong R_{\fp}^{\oplus r_{\fp}}$. 
\item $N$ is generically free and $\dim_R(N)=\dim(R)$.
\end{enumerate}
If $F^e_R(N)\otimes_R M$ is Cohen-Macaulay for some $e \gg 0$, then $N$ is free.
\end{cor}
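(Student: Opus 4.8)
The plan is to reduce the statement to Corollary~\ref{min1} by verifying that the hypotheses there are met, the only nontrivial point being to upgrade the assumption ``$F^e_R(N)\otimes_R M$ is Cohen-Macaulay'' to ``$F^e_R(N)\otimes_R M$ is \emph{maximal} Cohen-Macaulay''. First I would observe that hypothesis~(i) here is exactly hypothesis~(ii) of Corollary~\ref{min1}: for each $\fp \in \Min(R)$ the localization $M_\fp \cong R_\fp^{\oplus r_\fp}$ has constant rank $r_\fp$, and since each $r_\fp \ge 1$, in particular $\Min(R) \subseteq \Supp_R(M)$ (as in the proof of Corollary~\ref{min1}); together with $M$ being Cohen-Macaulay this gives conditions~(i) and the ``$M$ has constant rank on $\Min(R)$'' part of~(ii). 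The ``$N$ generically free'' part of~(ii) is hypothesis~(ii) here.

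The key step is the dimension count. Set $d = \dim(R)$. Since $N$ is generically free and $\dim_R(N) = d$, there is some minimal prime $\fq \in \Min(R)$ with $N_\fq \ne 0$, hence $N_\fq \cong R_\fq^{\oplus s}$ with $s \ge 1$, and then $(F^e_R(N)\otimes_R M)_\fq \cong F^e_{R_\fq}(N_\fq) \otimes_{R_\fq} M_\fq \cong R_\fq^{\oplus\, s\, r_\fq} \ne 0$ using $M_\fq \cong R_\fq^{\oplus r_\fq}$ and the base-change identities for $F^e_R(-)$ recalled before Corollary~\ref{CSY-gen}. Therefore $\fq \in \Supp_R\big(F^e_R(N)\otimes_R M\big)$, which forces $\dim_R\big(F^e_R(N)\otimes_R M\big) = d$. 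Combined with the assumption that $F^e_R(N)\otimes_R M$ is Cohen-Macaulay, we get $\depth_R\big(F^e_R(N)\otimes_R M\big) = \dim_R\big(F^e_R(N)\otimes_R M\big) = d$, i.e.\ it is a maximal Cohen-Macaulay $R$-module. Now Corollary~\ref{min1} applies verbatim and yields that $N$ is free.

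The main obstacle, such as it is, is purely the bookkeeping in the dimension count: one must be careful that $\dim_R(N) = d$ alone does not immediately give a minimal prime $\fq$ with $N_\fq \ne 0$ and $\dim(R/\fq) = d$, but it does give \emph{some} prime in $\Supp_R(N)$ of coheight $d$, hence a minimal prime below it lying in $\Supp_R(N)$; intersecting with $\Min(R)$ and using generic freeness of $N$ is what makes the localized tensor product nonzero. Everything else is an appeal to the base-change formulas $F^e_{R_\fp}(X_\fp) \cong (F^e_R(X))_\fp$ and $\Supp_R(X) = \Supp_R(F^e_R(X))$ for finitely generated $X$, together with the already-established Corollary~\ref{min1}.
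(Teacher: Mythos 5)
Your approach is the paper's: upgrade ``$F^e_R(N) \otimes_R M$ is Cohen-Macaulay'' to ``maximal Cohen-Macaulay'' via a dimension count, then cite an earlier corollary; the dimension count (locating a minimal prime $\fq \in \Supp_R(N)$ and checking that the localized tensor product is nonzero) is correct and is the heart of the matter.

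However, the reduction to Corollary~\ref{min1} has a flaw. You assert that hypothesis~(i) of Corollary~\ref{min2} is ``exactly'' hypothesis~(ii) of Corollary~\ref{min1}, but in min1 the phrase ``$M$ has constant rank on $\Min(R)$'' means the rank $r_\fp$ is the \emph{same} for all $\fp \in \Min(R)$, whereas min2(i) allows the $r_\fp$ to vary with $\fp$. The constancy is genuinely used in the proof of Corollary~\ref{min1}, to argue that if one minimal prime lies outside $\Supp_R(M)$ then all of them do; so Corollary~\ref{min1} does not apply verbatim here. The fix is immediate: you have already noted $\Min(R) \subseteq \Supp_R(M)$ (since each $r_\fp \ge 1$), which forces $\Supp_R(M) = \Spec(R)$ because the support is closed and contains every minimal prime, so you should invoke Corollary~\ref{CSY-gen} directly --- which is exactly what the paper does. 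With $\Supp_R(M) = \Spec(R)$ in hand, the dimension count can also be streamlined: $\Supp_R\big(F_R^e(N) \otimes_R M\big) = \Supp_R\big(F_R^e(N)\big) = \Supp_R(N)$, hence $\dim_R\big(F_R^e(N) \otimes_R M\big) = \dim_R(N) = \dim(R)$, with no need to single out a particular $\fq$.
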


\begin{proof} It follows that $\dim_R\big(F_R^e(N)\otimes_RM\big)=\dim_R\big(F_R^e(N)\big)=\dim_R(N)=\dim(R)$ because $\Supp_R(M)=\Spec(R)$.
Now, the claim follows from Theorem \ref{CSY-gen}.
\end{proof}

We illustrate the sharpness of Theorem~\ref{CSY-gen-intro} with some examples. Specifically, Example~\ref{ornek1} demonstrates that the theorem's conclusion may fail if the module $M$ in question does not have full support. Similarly, Example~\ref{ornek2} highlights the necessity of the assumption that $M$ is generically free for the theorem to hold.

\begin{eg}\label{ornek1} Let $R=\mathbb{F}_p[\![x,y]\!]/(xy)$, $M=R/(x)$, and $N=M$. Note that $M$ is maximal Cohen-Macaulay, and $N_{\fp}$ is a free $R_{\fp}$-module for all $\fp \in \Ass(R)$.  It follows that $N\otimes_R\up{e}M$ is maximal Cohen-Macaulay for all $e\geq 0$ since
$N\otimes_R\up{e}M \cong N\otimes_RM^{\oplus p^e}\cong M^{\oplus p^e}$. However, $N$ is not free. Here we have that $\Spec(R) \ni (y) \notin \Supp_R(M)$.
\end{eg}

\begin{eg}\label{ornek2} Let $R=\mathbb{F}_p[\![x,y]\!]/(x^2,xy)$, $M=R/(x)$, and $N=M$. Note that $M$ is Cohen-Macaulay, $\Supp_R(M)=\Spec(R)$, and $N_{\fp}$ is a free $R_{\fp}$-module for all $\fp \in \Min(R)$. It follows that $N\otimes_R\up{e}M$ is maximal Cohen-Macaulay for all $e\geq 0$ since $N\otimes_R\up{e}M \cong M^{\oplus p^e}$. However, $N$ is not free. Here we have that $ \Ass(R) \ni \fm \notin \Min(R)$. 
\end{eg}

It is proved in \cite[1.1]{AHIY} that, if there is a finitely generated module $N \neq 0$ over a local ring $R$ of prime characteristic $p$ such that $\fd_R(\up{r}N) < \infty$ or $\id_R(\up{r}N) < \infty$ for some ${r} \ge 1$, then $R$ is regular. We use this fact and obtain the following consequences of Theorem~\ref{main-thm-intro}; cf. \cite[6.8 and 6.10]{Test2}.

\begin{cor} \label{cor:main-thm-intro} Let $(R,\fm, k)$ be a $d$-dimensional local ring of prime characteristic $p$, $M$ be a finitely generated Cohen-Macaulay $R$-module such that $\Supp_R(M) = \Spec(R)$, and let $N \neq 0$ be a finitely generated $R$-module. Assume $\depth(R) \geq 1$. Given $t\geq 1$, ${r} \ge 1$ and $e \gg 0$, we further assume that at least one of the following conditions holds:
\begin{enumerate}[\rm(i)]
\item $\Tor^R_i(\up{e}M, \up{r}N)=0$ for all $i=t,\, \dots,\,t+d-1$.
\item $R$ is F-finite and $\Ext^i_R(\up{e}M, \up{r}N)=0$ for all $i=t,\, \dots,\,t+d-1$.
\item $\up{r}N$ is finitely generated over $R$ and $\Ext^i_R(\up{r}N, \up{e}M)=0$ for all $i=t,\, \dots,\,t+d-1$. 
\end{enumerate}
Then $R$ is regular.
\end{cor}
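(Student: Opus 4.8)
The plan is to feed the module $\up{r}N$ (in place of $N$) into Theorem~\ref{main-thm-intro}, and then invoke the regularity criterion of Aberbach–Hochster–Iyengar–Yao recorded before the statement, namely \cite[1.1]{AHIY}: over a local ring of prime characteristic $p$, if some nonzero finitely generated module $X$ has $\fd_R(\up{r}X)<\infty$ or $\id_R(\up{r}X)<\infty$ for some $r\ge 1$, then $R$ is regular. The only preliminary remark needed is that $\up{r}N\neq 0$, which is clear since the underlying abelian group of $\up{r}N$ coincides with that of $N$; moreover, $\up{r}N$ is finitely generated over $R$ whenever $R$ is F-finite, and this finite generation is assumed outright in case (iii). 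Note also that the hypotheses on $R$ and $M$ ($M$ Cohen–Macaulay with $\Supp_R(M)=\Spec(R)$, $\depth(R)\ge 1$, $\dim(R)=d$), as well as the vanishing ranges $i=t,\dots,t+d-1$, are exactly those in Theorem~\ref{main-thm-intro}, so no re-indexing or extra reduction is required.

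First I would treat case (i): applying Theorem~\ref{main-thm-intro}(i) with $N$ replaced by the arbitrary $R$-module $\up{r}N$ gives $\fd_R(\up{r}N)\le d<\infty$, and since $N$ itself is a nonzero finitely generated $R$-module, \cite[1.1]{AHIY} forces $R$ to be regular. For case (ii), the F-finite hypothesis makes $\up{r}N$ a finitely generated $R$-module, so Theorem~\ref{main-thm-intro}(ii) applies and yields $\id_R(\up{r}N)\le d<\infty$; again \cite[1.1]{AHIY} gives regularity. For case (iii), $\up{r}N$ is finitely generated by assumption, so Theorem~\ref{main-thm-intro}(iii) gives $\pd_R(\up{r}N)\le\min\{t-1,\depth(R)\}<\infty$, hence $\fd_R(\up{r}N)<\infty$, and \cite[1.1]{AHIY} completes the argument. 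In each case the conclusion is manifestly independent of $t$ and $r$, which is consistent with the fact that $\up{r}N$ ranges over a family of modules all having support $\Supp_R(N)$.

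I do not expect any genuine obstacle here once Theorem~\ref{main-thm-intro} is available; the proof is essentially a bookkeeping exercise. The only points meriting a moment's care are: verifying that the module being plugged into each part of Theorem~\ref{main-thm-intro} satisfies the (absence of) finite-generation requirement appropriate to that part—arbitrary in (i), finitely generated in (ii) via F-finiteness and in (iii) by hypothesis; observing that $\up{r}N\neq 0$ so that \cite[1.1]{AHIY} is applicable with $X=N$; and recording that in case (iii) finite projective dimension of $\up{r}N$ yields finite flat dimension, which is the input \cite[1.1]{AHIY} wants.
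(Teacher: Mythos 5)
Your proposal is correct and takes essentially the same approach as the paper, which simply notes that in each case Theorem~\ref{main-thm-intro} applied to $\up{r}N$ yields finite flat, injective, or projective dimension and then invokes \cite[1.1]{AHIY}. One small inaccuracy worth noting: in case (ii), Theorem~\ref{main-thm-intro}(ii) applies to arbitrary (not necessarily finitely generated) $R$-modules, so the observation that F-finiteness makes $\up{r}N$ finitely generated is not actually needed for that theorem to apply, though it does no harm.
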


\begin{proof} In view of Theorem~\ref{main-thm-intro}, the claims follow from \cite[1.1]{AHIY}.
\end{proof}

\begin{rmk} We conclude this section by pointing out some lower bounds for the integer 
$e$ that ensure the validity of the proofs of certain previously stated results.
\begin{enumerate}[\rm(a)]
\item In both Corollary~\ref{CSY} and Theorem~\ref{CSY-gen}, it is enough to assume $e \ge \lceil \log_p \e_R(M)\rceil$.
\item In both Theorem~\ref{p1} and Corollary \ref{cor:dual-p1}, it is enough to assume $e \ge \lceil \log_p \e_R(M)\rceil$. This is because, in their proofs, we may use \ref{finite} and assume $|k| = \infty$.
\item In Corollary~\ref{cor:main-thm-intro}, it is enough to assume $e \ge \max\left\{\lceil \log_p \e_R(M)\rceil,\, \drsp(M)\right\}$. Also, if $|k| = \infty$, then it is enough to assume $e \ge \lceil \log_p \e_R(M)\rceil$ in Corollary~\ref{cor:main-thm-intro}.
\end{enumerate}
\end{rmk}

We do not know whether or not the Cohen-Macaulay assumption on $M$ is necessary in Theorem~\ref{CSY-gen-intro}. This raises the following question:

\begin{ques} Let $(R,\fm, k)$ be a local ring of prime characteristic $p$ and let $M$ and $N$ be finitely generated $R$-modules. Assume 
$\Supp_R(M) = \Spec(R)$ and $N$ is generically free. If $F^e_R(N)\otimes_R M$ is maximal Cohen-Macaulay for some $e \gg 0$, then must $N$ be free? What if $R$ is Cohen-Macaulay, or $N=M$?
\end{ques} 

We complete this section with a global version of Theorem~\ref{p1}; we skip its proof as it is similar to that of Theorem~\ref{injective-global}.

\begin{thm}\label{p1-global} Let $R$ be a $d$-dimensional excellent ring of prime characteristic $p$, $M$ be a finitely generated Cohen-Macaulay $R$-module such that $\Supp_R(M)=\Spec(R)$, and let $N$ be a finitely generated $R$-module. Assume $\depth(R_{\fm}) \geq \min \big\{1, \dim(R_{\fm}) \big\}$ for each maximal ideal $\fm$ of $R$. Set $s=\sup\left\{ \e_{R_{\fm}}(M_{\fm}): \fm \in \Max(R) \right\}$, and let $e$ be an integer such that $e\geq \lceil \log_p s \rceil$. Given $t\geq 1$, if $\Ext^i_R(N,\up{e}M)=0$ for all $i=t,\, \dots,\,t+d-1$, then $\pd_R(N)\leq \min\{t-1,\,\depth(R)\}$. 
\end{thm}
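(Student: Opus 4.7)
The plan is to mimic the proof of Theorem~\ref{injective-global}: localize at each maximal ideal, handle the zero-dimensional case via the residue-field-as-direct-summand trick from \ref{DS}(2)(iii), and invoke the local Theorem~\ref{p1} when the localization has positive dimension. A uniform choice of $e$ that works simultaneously at every maximal ideal is ensured by the finiteness of $s$.

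First, by \cite[2.17]{MelYongwei}, the supremum $s$ is finite, so the assumption $e \geq \lceil \log_p s \rceil$ is meaningful and guarantees $e \geq \lceil \log_p \e_{R_{\fm}}(M_{\fm})\rceil$ for every $\fm \in \Max(R)$; per the remark on lower bounds for $e$ at the end of Section~\ref{sec:Frob-proj}, this is precisely the threshold that allows Theorem~\ref{p1} to be applied at $R_{\fm}$. Fix $\fm \in \Max(R)$ and localize the hypothesis to obtain $\Ext^i_{R_{\fm}}(N_{\fm}, \up{e}M_{\fm}) = 0$ for $i = t, \dotsc, t+d-1$, a range that certainly contains $i = t, \dotsc, t + \dim(R_{\fm}) - 1$. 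Observe that $M_{\fm}$ remains finitely generated Cohen-Macaulay over $R_{\fm}$ with $\Supp_{R_{\fm}}(M_{\fm}) = \Spec(R_{\fm})$, and that $N_{\fm}$ is finitely generated over $R_{\fm}$.

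If $\dim(R_{\fm}) \geq 1$, then $\depth(R_{\fm}) \geq 1$ by hypothesis, and Theorem~\ref{p1} applied locally yields $\pd_{R_{\fm}}(N_{\fm}) \leq \min\{t-1,\, \depth(R_{\fm})\}$. If $\dim(R_{\fm}) = 0$, then $R_{\fm}$ is Artinian local and, since $\depth_{R_{\fm}}(M_{\fm}) = 0$, \ref{DS}(2)(iii) shows that the residue field $k(\fm)$ is a direct summand of $\up{e}M_{\fm}$ for our $e$; hence $\Ext^t_{R_{\fm}}(N_{\fm}, k(\fm)) = 0$, forcing $\pd_{R_{\fm}}(N_{\fm}) < t$. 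Since $\depth(R_{\fm}) = 0$, the Auslander-Buchsbaum formula then gives $\pd_{R_{\fm}}(N_{\fm}) = 0 = \min\{t-1,\, \depth(R_{\fm})\}$.

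Finally, projective dimension of a finitely generated module can be computed locally, so taking the supremum over $\fm \in \Max(R)$ gives $\pd_R(N) \leq \min\{t-1,\, \depth(R)\}$, as desired. No serious obstacle is anticipated: the argument is a routine globalization of Theorem~\ref{p1}, and the only delicate point is the uniformity of $e$, which is exactly what the finiteness of $s$ (via \cite[2.17]{MelYongwei}) provides.
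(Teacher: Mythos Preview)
Your proposal is correct and follows precisely the approach the paper intends: the paper explicitly omits the proof, stating it is ``similar to that of Theorem~\ref{injective-global},'' and your argument is exactly that localization-at-maximal-ideals strategy, handling the Artinian case via the direct-summand trick and the positive-dimensional case via Theorem~\ref{p1} with the uniform bound on $e$ supplied by \cite[2.17]{MelYongwei}. One small sharpening: in the zero-dimensional case you cite \ref{DS}(2)(iii), which only asserts the direct-summand property for $e \gg 0$; to see that your specific $e \ge \lceil \log_p \e_{R_\fm}(M_\fm)\rceil$ suffices, note that in the Artinian case $\e_{R_\fm}(M_\fm) = \length(M_\fm)$ bounds the Loewy length, so $\fm^{[p^e]}M_\fm = 0$ and \ref{DS}(2)(i) applies directly (this is the reference the paper itself uses in the proof of Theorem~\ref{injective-global}).
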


\newpage

\section*{Acknowledgements}
Part of this work was completed during Celikbas's visit to the Department of Mathematics and Statistics at Georgia State University in February 2018, and during the visit of Celikbas, Sadeghi, and Yao to the Nesin Mathematics Village in May 2019. The authors thank these two institutions for their hospitality. They also thank Mohsen Asgarzadeh for his comments on a previous version of the manuscript.

\end{document}